\theoremstyle{plain} 
\newcommand \reg {\mathrm {reg}}
\newcommand \Gin {\ensuremath{\mathrm{Gin}}}
\newcommand \In {\ensuremath{\mathrm{in}}}
\newcommand \Proj {\ensuremath{\mathrm{Proj}}}
\newcommand \red {\ensuremath{\mathrm{red}}}
\newcommand \mult {\ensuremath{\mathrm{mult}}}
\newcommand \p {\ensuremath{\mathbb{P}}}
\newcommand \lex {\ensuremath{\mathrm{GLex}}}
\newcommand \Tor {\ensuremath{\mathrm{Tor}}}
\newcommand \st[1] {\stackrel{#1}{\longrightarrow}}
\theoremstyle{definition}
\newtheorem{Thm}{Theorem}[section]
\newtheorem{Defn}{Definition}[section]
\newtheorem{lem}[Thm]{Lemma}
\newtheorem{remk}[Thm]{Remark}
\newtheorem{Ex}[Thm]{Example}
\newtheorem{prop}[Thm]{Proposition}
\newtheorem{Cor}[Thm]{Corollary}
\newtheorem{RQ}[Thm]{Remark and Question}
\begin{document}

\title{The Degree Complexity of Smooth Surfaces of codimension 2}
\author[J.\ Ahn, S.\ Kwak and Y.\ Song]{Jeaman Ahn , Sijong Kwak and YeongSeok Song}
\address{ Mathematics Education, Kongju National Univesity,
182 Sinkwan-dong Kongju-si Chungnam, 314-702, Korea}
\email{jeamanahn@kongju.ac.kr}

\address{Department of Mathematics, Korea Advanced Institute of Science and Technology,
373-1 Gusung-dong, Yusung-Gu, Daejeon, Korea}
\email{skwak@kaist.ac.kr}
\thanks{The second author was supported in part by the National Research Foundation of Korea funded
by the Ministry of Education, Science and Technology(grant No. 2010-0001652)}
\address{Department of Mathematics, Korea Advanced Institute of Science and Technology,
373-1 Gusung-dong, Yusung-Gu, Daejeon, Korea}
\email{lassei@kaist.ac.kr}

\begin{abstract}
For a given term order, the degree complexity of a projective scheme is defined
by the maximal degree of the reduced Gr\"{o}bner basis of its defining saturated
ideal in generic coordinates~\cite{BM}.
It is well-known that the degree complexity with respect to the graded reverse
lexicographic order is equal to the Castelnuovo-Mumford regularity~\cite{BS}.
However, much less is known if one uses the graded lexicographic order~\cite{A},~\cite{CS}.

In this paper, we study the degree complexity of a smooth irreducible surface in $\p^4$
with respect to the graded lexicographic order and its geometric meaning.
Interestingly, this complexity is closely related to the invariants of the double curve
of a surface under a generic projection. As results, we prove that except a few cases, the degree
complexity of a smooth surface $S$ of degree $d$ with $h^0(\mathcal I_S(2))\neq 0$ in $\p^4$
is given by $2+\binom{\deg Y_1(S)-1}{2}-g(Y_1(S))$, where $Y_1(S)$ is a double curve
of degree $\binom{d-1}{2}-g(S \cap H)$ under a generic projection of $S$.
In particular, this complexity is actually obtained at the monomial
$$
x_0 x_1 x_3\,^{\binom{\deg Y_1-1}{2}-g(Y_1(S))}
$$
where $k[x_0,x_1, x_2,x_3,x_4]$ is a polynomial ring defining $\p^4$.
Exceptional cases are a rational normal scroll, a complete intersection surface of $(2,2)$-type,
or a Castelnuovo surface of degree $5$ in $\p^4$ whose degree complexities are in fact equal to their
degrees. This complexity can also be expressed in terms of degrees of defining equations
of $I_S$ in the same manner as the result of A. Conca and J. Sidman~\cite{CS}.
We also provide some illuminating examples of our results
via calculations done with {\it Macaulay 2}~\cite{GS}.
\end{abstract}

\maketitle

\tableofcontents \setcounter{page}{1}
\section{Introduction}\label{section_1}
D. Bayer and D. Mumford in \cite{BM} have introduced the degree complexity of a homogeneous ideal $I$
with respect to a given term order $\tau$ as the maximal degree of the reduced Gr\"{o}bner basis of $I$,
and this is exactly the highest degree of minimal generators of the initial ideal of $I$.
Even though degree complexity depends on the choice of coordinates, it is constant in generic coordinates
since the initial ideal of $I$ is invariant under a generic change of coordinates, which is the so-called
the generic initial ideal of $I$ \cite{E1}.

For the graded lexicographic order (resp. the graded reverse lexicographic order), we denote by $M(I)$
(resp. $m(I)$) the degree complexity of $I$ {\it in generic coordinates}.
For a projective scheme $X$, the degree complexity of $X$ can also be defined as $M(I_X)$ (resp. $m(I_X)$)
for the graded lexicographic order (resp. the graded reverse lexicographic order) where $I_X$ is the defining
saturated ideal of $X$.

D. Bayer and M. Stillman have shown in ~\cite{BS} that $m(I)$ is exactly
equal to the Castelnuovo-Mumford regularity $\reg(I)$.
Then what can we say about $M(I)$?
A. Conca and J. Sidman proved in ~\cite{CS} that if $I_C$ is the defining ideal
of a smooth irreducible complete intersection curve $C$ of type $(a,b)$ in $\p^3$ then
$M(I_C)$ is $1 + \frac{ab(a-1)(b-1)}{2}$ with the exception of the case $a=b=2$, where
$M(I_C)$ is $4$.
Recently, J. Ahn has shown in ~\cite{A} that if $I_C$ is the defining ideal of a
non-degenerate smooth integral curve of degree $d$ and genus $g(C)$ in $\p^r$ (for $r\ge 3$),
then $M(I_C) = 1 + \binom{d-1}{2}-g(C)$
with two exceptional cases.

In this paper, we would like to compute the degree complexity of a smooth surface $S$
in $\p^4$ with respect to the graded lexicographic order.
Interestingly, this complexity is closely related to the invariants of the double curve of $S$
under the generic projection. Our main results are: if $S \subset \p^4$ is a smooth irreducible
surface of degree $d$ with $h^0(\mathcal I_S(2))\neq 0$, then the degree complexity $M(I_S)$ of
$S$ is given by $2+\binom{\deg Y_1(S)-1}{2}-g(Y_{1}(S))$ with three
exceptional cases, where
$Y_1(S)$ is a smooth double curve of $S$ in $\p^3$ under a generic projection
and $\deg Y_{1}(S)=\binom{d-1}{2}-g(S\cap H)$. Moreover, this complexity is actually obtained
at the monomial
$$
x_0 x_1 x_3\,^{\binom{\deg Y_1-1}{2}-g(Y_1(S))}
$$
where $k[x_0,x_1, x_2,x_3,x_4]$ is a polynomial ring defining $\p^4$.

On the other hand, $M(I_S)$ can also be expressed in terms of degrees of defining equations
of $I_S$ in the same manner as the result of A. Conca and J. Sidman \cite {CS} (see Theorem~\ref{cor:01}).
Note that if $S$ is a locally Cohen-Macaulay surface with $h^0(\mathcal I_S(2))\neq 0$ then there
are two types of $S.$ One is a complete intersection of $(2,\alpha)$-type and the other is projectively
Cohen-Macaulay of degree $2 \alpha -1.$
For those cases, $\deg Y_1(S)$, $g(Y_1(S))$ and $g(S\cap H)$ can be obtained in terms of $\alpha$.

Consequently, if $S$ is a complete intersection of $(2, \alpha)$-type for some $\alpha\geq 3$ then
$M(I_S)=\frac{1}{2}(\alpha^4-4\alpha^3+5\alpha^2-2\alpha+4).$ If $S$ is projectively Cohen-Macaulay
of degree $2\alpha-1,$ $\alpha \geq 4$, then
$M(I_S) =
\frac{1}{2}(\alpha^4-6\alpha^3+13\alpha^2-12\alpha+8)$~(see Theorem ~\ref{cor:01}). 
Exceptional cases are a rational normal scroll, a complete intersection surface of $(2,2)$-type, or
a Castelnuovo surface of degree $5$ in $\p^4.$ In these cases, $M(I_S)=\deg(S)$~(see Theorem ~\ref{mainthm2}). 

The main ideas are divided into two parts: one is to show that the
degree complexity $M(I_S)$ is given by the maximum of
$\reg(\Gin_{\lex}(K_{i}(I_{S})))+i$ for $i=0,1$ and the other part
is to compare the schemes of multiple loci defined by partial
elimination ideals and their classical scheme
structures defined by the Fitting ideals of an $\mathcal O_{\p^3}$-module $\pi_{*}\mathcal O_S$
where $\pi$ is a generic projection of $S$ to $\p^3$.\\

{\bf Acknowledgements} We are very grateful to the anonymous referee for valuable and helpful suggestions.
In addition, the program {\it Macaulay 2} has been useful to us in computations of concrete examples, and
in understanding the degree complexity of smooth surfaces in $\p^4$.


\section{Notations and basic facts}\label{section_2}

\begin{itemize}
 \item We work over an algebraically closed field $k$ of characteristic zero.

\item Let $R=k[x_0 , \ldots , x_r]$ be a polynomial ring over $k$. For a closed subscheme $X$ in $\p^r$,
we denote the defining saturated ideal of $X$ by
$$\label{eq1}
I_X = \underset{m=0}{\overset{\infty}{\bigoplus}}H^{0}(\mathcal{I}_X(m))
$$

\item For a homogeneous ideal $I$, the Hilbert function of $R/I$ is defined by
$H(R/I, m):=\dim_k (R/I)_m$ for any non-negative integer $m$.
We denote its corresponding Hilbert polynomial by $P_{R/I}(z)\in \Bbb Q[z]$. If $I=I_X$
then we simply write $P_X(z)$ instead of $P_{R/I_X}(z)$.

\item We write $\rho_a(X)=(-1)^{\dim(X)}(P_X(0)-1)$ for the arithmetic genus of $X$.

 \item For a homogeneous ideal $I\subset R$, consider a minimal free resolution
$$\cdots \rightarrow \bigoplus_jR(-i-j)^{\beta_{i,j}(I)}\rightarrow\cdots\rightarrow\bigoplus_jR(-j)^{\beta_{0,j}(I)}\rightarrow I\rightarrow 0$$
of $I$ as a graded $R$-modules.
We say that $I$ is $m$-regular if $\beta_{i,j}(I)=0$ for all $i\geq 0$ and $j\geq m$.
The Castelnuovo-Mumford regularity of $I$ is defined by
\[\reg(I):=\min\{\,m\,\mid\, I \textup{ is } m \textup{-regular}\}.\]

\item Given a term order $\tau$, we define the initial term $\In_{\tau}(f)$ of  a homogeneous polynomial $f \in R$
to be the greatest monomial of $f$ with respect to $\tau$. If $I \subset R$ is a homogeneous ideal,
we also define the initial ideal $\In_{\tau}(I)$ to be the ideal generated by $\{ \In_{\tau}(f) \mid f \in I \}$.
A set $G = \{ g_1 , \ldots , g_n \} \subset I$ is said to be a Gr\"{o}bner basis if
\[(\In_{\tau}(g_1) , \ldots , \In_{\tau}(g_n) ) = \In_{\tau}(I).\]

\item For an element $\alpha = (\alpha_0 , \ldots , \alpha_r) \in \mathbb{N}^r$ we define
the notation $x^{\alpha} = x_{0}^{\alpha_{0}} \cdots x_{r}^{\alpha_{r}}$ for monomials. Its degree is
$\mid\alpha\mid =\sum_{i=0}^{r}\alpha_{i}.$

For two monomial terms $x^{\alpha}$ and $x^{\beta}$, the {\it graded lexicographic order} is defined by
$x^{\alpha} \geq_{\mathrm{GLex}} x^{\beta}$ if and only if $|\alpha| > |\beta|$ or $|\alpha| = |\beta|$
and if the left most nonzero entry of $\alpha - \beta$ is positive. The {\it graded reverse lexicographic order}
is defined by $x^{\alpha} \geq_{\mathrm{GRLex}} x^{\beta}$ if and only if
we have $|\alpha| > |\beta|$ or $|\alpha| = |\beta|$ and if the right most nonzero entry of $\alpha - \beta$
is negative.

\item In characteristic 0, we say that a monomial ideal $I$ has the Borel-fixed property if, for some monomial $m$,
we have $x_im\in I$, then $x_jm\in I$ for all $j\leq i$.

\item Given a homogeneous ideal $I \subset R$ and a term order $\tau$, there is a Zariski open subset
$U \subset GL_{r+1}(k)$ such that $\In_{\tau}(g(I))$ is constant. We will call $\In_{\tau}(g(I))$
for $g \in U$ the generic initial ideal of $I$ and denote it by $\Gin_{\tau}(I)$. Generic initial ideals
have the Borel-fixed property (see \cite{E1},\cite{G}).

\item For a homogeneous ideal $I\subset R$, let $m(I)$ and $M(I)$ denote the maximum of the degrees of minimal generators of $\Gin_{\mathrm{GRLex}}(I)$ and $\Gin_{\mathrm{GLex}}(I)$ respectively.

\item If $I$ is a Borel fixed monomial ideal then $\reg(I)$ is exactly the maximal degree of minimal generators of $I$ (see \cite{BS},\cite {G}). This implies that
 \[m(I)=\reg(\Gin_{\mathrm{GRLex}}(I)) \text{ and } M(I)=\reg(\Gin_{\mathrm{GLex}}(I)).\]

\end{itemize}

\bigskip


\section{Gr\"obner bases of partial elimination ideals}
\begin{Defn}\label{defpartial}
Let $I$ be a homogeneous ideal in $R$. If $f \in I_d$ has leading term $\In(f)=x_{0}^{d_{0}} \cdots x_{r}^{d_{r}}$,
we will set $d_{0}(f)=d_0$, the leading power of $x_0$ in $f$. We let
\begin{equation*}
\widetilde{K}_{i}(I)= \bigoplus_{d \geq 0} \{f \in I_{d} \mid  d_{0}(f) \leq i  \}.
\end{equation*}
If $f \in \widetilde{K}_{i}(I)$, we may write uniquely
\begin{equation*}
f=x_{0}^{i}\overline{f} + g,
\end{equation*}
where $d_{0}(g) < i$. Now we define $K_{i}(I)$ as the image of $\widetilde{K}_{i}(I)$ in $\bar{R}=k[x_1 \ldots x_r]$
under the map $f\rightarrow \overline{f}$ and we call $K_{i}(I)$ the $i$-th partial elimination ideal of $I$.
\end{Defn}

\begin{remk}
We have an inclusion of the partial elimination ideals of $I$:
$$ I\cap\bar{R}= K_0(I)\subset K_1(I)\subset \cdots \subset K_i(I)\subset  K_{i+1}(I)\subset\cdots \subset \bar{R}.$$
Note that if $I$ is in generic coordinates and $i_0=\min\{i \mid I_i \neq 0 \}$ then $K_{i}(I)=\bar R$ for all $i\geq i_0$.
\end{remk}

The following result gives the precise relationship between partial elimination ideals and the geometry of the projection map from $\p^r$
to $\p^{r-1}$. For a proof of this proposition, see \cite[Propostion 6.2]{G}.

\begin{prop}\label{defset}
Let $X \subset \p^{r}$ be a reduced closed subscheme and let $I_X$ be the defining ideal of $X$.
Suppose $p= [1,0, \ldots ,0 ] \in \p^r \setminus X$ and that $\pi : X \rightarrow \p^{r-1}$ is the projection from
the point $p \in \p^r$ to $x_{0} = 0$. Then, set-theoretically, $K_{i}(I_{X})$ is the ideal of
$\{ q \in \pi(X) \mid \mult_{q}(\pi(X))> i \}.$
\end{prop}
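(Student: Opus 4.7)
The plan is to test both descriptions line-by-line along the fibers of $\pi$. Parametrize $L_q := \overline{pq}$ by $\phi:\p^1\to\p^r$, $[s:t]\mapsto [s:tq_1:\cdots:tq_r]$, so that $\phi([1:0])=p$. Since $p\notin X$, the scheme-theoretic fiber $Z := \pi^{-1}(q) = L_q\cap X$ is a zero-dimensional subscheme of $L_q\setminus\{p\}$ of length $\mu(q) := \mult_q\pi(X)$, and the claim reduces to showing that $q\in V(K_i(I_X))$ if and only if $\mu(q)>i$.

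First I would dispatch the easier direction: suppose some $\bar f\in K_i(I_X)$ satisfies $\bar f(q)\ne 0$. Lifting to $f\in\widetilde K_i(I_X)$ and writing $f=\sum_{j=0}^i x_0^j f_{d-j}$ with $f_{d-i}=\bar f$, the pullback is
\[
\phi^* f(s,t) = t^{d-i} h(s,t),\qquad h(s,t) := \sum_{j=0}^i s^j t^{i-j} f_{d-j}(q),
\]
a homogeneous form of degree $i$ whose $s^i$-coefficient $\bar f(q)$ is nonzero. Since $f$ vanishes on $Z$ and $t$ is a unit on $Z$, the factor $h$ vanishes on $Z$, giving $\mu(q) = \deg Z \le \deg h = i$.

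For the converse, given $\mu(q)\le i$, the strategy is to produce some $\bar f\in K_i(I_X)$ with $\bar f(q)\ne 0$. I would fix an effective divisor $D_0$ of degree $i - \mu(q)$ on $L_q$ supported off $\{p\}\cup Z$; for $d\gg 0$ the section of $\mathcal O_{L_q}(d)$ with zero divisor $Z + (d-i)[p] + D_0$ lifts, via the surjective restriction map $(I_X)_d\to H^0(L_q,\mathcal I_{Z,L_q}(d))$, to some $F\in (I_X)_d$ whose pullback to $L_q$ is $t^{d-i}h_0(s,t)$ with $h_0$ of degree $i$ and $h_0(1,0)\ne 0$. This already forces $F_{d-j}(q) = 0$ for $j > i$ and $F_{d-i}(q) = h_0(1,0)\ne 0$.

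The main obstacle is the last move: reducing $d_0(F)$ down to at most $i$ without changing $\phi^* F$. If $d_0(F) = k > i$, the top coefficient $F_{d-k}$ belongs to $K_k(I_X)$ and vanishes at $q$, so one hopes to subtract an element of $\widetilde K_k(I_X)\cap I_{L_q}$ with top $x_0$-coefficient $F_{d-k}$ and thereby lower $d_0(F)$ by one. Exhibiting enough such elements is the technical heart; Green does so in \cite[Proposition 6.2]{G} by a Gr\"{o}bner-type reduction inside $I_{X\cup L_q}$, while an alternative route is to identify $K_i(I_X)$ up to radical with the $i$-th Fitting ideal of $\pi_*\mathcal O_X$, from which $V(K_i(I_X)) = \{q : \mu(q) > i\}$ is immediate.
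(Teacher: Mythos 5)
Your forward implication is correct and complete: for $\bar f\in K_i(I_X)$ with $\bar f(q)\neq 0$, pulling a lift $f=x_0^i\bar f+g$ back to $L_q$ and stripping off $t^{d-i}$ leaves a nonzero form of degree $i$ vanishing on the fiber, so the fiber has length at most $i$. Note also that you are (rightly) reading $\mult_q$ as the length of the scheme-theoretic fiber $L_q\cap X$; that is the formulation under which the statement holds in this generality and is what \cite[Proposition 6.2]{G} actually proves. For comparison: the paper supplies no argument of its own for this proposition -- it simply refers to \cite[Proposition 6.2]{G} -- so the only thing to measure your attempt against is that citation.

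The converse direction, however, is genuinely incomplete, as you yourself flag. Producing $F\in (I_X)_d$ whose restriction to $L_q$ is $t^{d-i}h_0$ with $h_0(1,0)\neq 0$ only gives $F_{d-j}(q)=0$ for $j>i$; it does not put $F$ in $\widetilde K_i(I_X)$, and lowering $d_0(F)$ is not a loose end but the entire content of the hard inclusion. Your two escape routes do not supply it: citing Green's Proposition 6.2 is citing the very statement being proved (which is all the paper does anyway), and the identification of $K_i(I_X)$ with the $i$-th Fitting ideal of $\pi_*\mathcal O_X$ up to radical is far from ``immediate'' -- the paper itself needs Theorem~\ref{thm4.1}, only for $i=1$ and only under the hypothesis that $S$ lies on a quadric, to compare the two structures. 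A clean way to close the gap avoids lifting sections from the line and instead uses finiteness of $\pi|_X$ (which holds because $p\notin X$): on an affine chart of $\p^{r-1}$ containing $q$, write the coordinate ring of $X$ over the chart as $B=A[u]/J$, where $u$ is the dehomogenized $x_0$ and $A$ is the affine coordinate ring of the chart; the fiber over $q$ is $k[u]/\bar J$ with $\bar J=(P)$ and $\deg P=\mu(q)\le i$. Since $B$ is a finite $A$-module, Nakayama at the local ring of $q$ shows $1,u,\dots,u^{\mu(q)-1}$ generate $B$ locally, so after clearing denominators there exist $s\in A$ with $s(q)\neq 0$ and $b_j\in A$ with $su^{\mu(q)}-\sum_{j<\mu(q)}b_ju^j\in J$. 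Homogenizing yields an element of $I_X$ of $x_0$-degree $\mu(q)$ whose top coefficient does not vanish at $q$; since $K_{\mu(q)}(I_X)\subseteq K_i(I_X)$, this gives $q\notin V(K_i(I_X))$, which is exactly the step your draft leaves open.
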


For each $i\geq 0$, note that we can give a scheme structure on the set
$$Y_i(X):=\{ q \in \pi(X) \mid \mult_{q}(\pi(X)) > i \} $$
from the $i$-th partial elimination ideal $K_i(I)$. Let
\begin{equation*}
Z_{i}(X):=\Proj(\bar{R}/K_{i}(I_{X})),
\end{equation*}
where $\bar{R}=k[x_1 \ldots x_r]$. Then it follows from Proposition~\ref{defset} that
\begin{equation*}
Z_{i}(X)_{\red}=Y_i(X).
\end{equation*}

\begin{remk}
Let $X\subset \p^r$ be a smooth variety of codimension two and let $\pi : X \rightarrow \p^{r-1}$
be a generic projection of $X$. A classical scheme structure on the set $Y_i(X)$ is given by $i$-th
Fitting ideal of the $\mathcal O_{\p^{r-1}}$-module $\pi_{*}\mathcal O_X$ (see \cite{KLU},\cite {MP}).
Throughout this paper, we use the notation $Y_i(X)$ in the sense that it is a closed subscheme defined
by Fitting ideal of $\pi_{*}\mathcal O_X$, as distinguished from the notation $Z_i(X)$.
We show that if $S\subset \p^4$ is a smooth surface lying in a quadric surface then $Y_1(S)$ and
$Z_1(S)$ have the same reduced scheme structure (see Theorem~\ref{thm4.1}),
which will be used in the proof of Proposition~\ref{mainthm2}.
\end{remk}

It is natural to ask: what is a Gr\"{o}bner basis of $K_i(I)$? Recall that any non-zero polyomial $f$ in $R$
can be uniquely written as $f=x^t\bar f+g$ where $d_0(g)<t$. A. Conca and J. Sidman \cite{CS} show that
if $G$ is a Gr\"{o}bner basis for an ideal $I$ then the set
$$
G_i=\{ \bar{f} \mid f\in G \mbox{ with } d_0(f)\leq i\}
$$
is a Gr\"{o}bner basis for $K_i(I)$. However if $I$ is in generic coordinates then there is a more refined
Gr\"{o}bner basis for $K_i(I)$, which plays an important role in this paper.

\begin{prop}\label{thm2}
Let $I$ be a homogeneous ideal {\it in generic coordinates} and $G$ be a Gr\"{o}bner basis for $I$ with respect to the graded lexicographic order.
Then, for each $i\geq 0$,
\begin{itemize}
\item[(a)] the $i$-th partial elimination ideal $K_i(I)$ is in generic coordinates;
\item[(b)] $G_i = \{ \bar{f} \mid f \in G ~\mbox{with} ~d_0 (f) = i\}$
is a Gr\"{o}bner basis for $K_i(I)$.
\end{itemize}

\end{prop}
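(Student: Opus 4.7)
The plan is to derive both parts from a single commuting identity: for any homogeneous ideal $I\subset R$,
$$\In_{\lex}(K_i(I)) = K_i(\In_{\lex}(I)),$$
which holds without any generic-coordinate hypothesis; the generic-coordinate assumption enters only when we invoke Borel-fixedness of $\In_{\lex}(I)=\Gin_{\lex}(I)$. To establish the identity, I would observe that for a homogeneous $f\in\widetilde{K}_i(I)$ with $d_0(f)=i$, writing $f=x_0^i\overline{f}+g$ with $d_0(g)<i$, the fact that $\lex$ breaks ties on total degree by prioritizing higher $x_0$-powers forces $\In_{\lex}(f)=x_0^i\cdot\In_{\lex}(\overline{f})$. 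The inclusion $\supseteq$ then follows by lifting any monomial $x^{\alpha'}$ with $x_0^i x^{\alpha'}\in\In_{\lex}(I)$ to some $f\in I$, so that $\overline{f}\in K_i(I)$ satisfies $\In_{\lex}(\overline{f})=x^{\alpha'}$; the inclusion $\subseteq$ is immediate from the same calculation.

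For part (a), the identity reduces the problem to showing that $K_i$ of a Borel-fixed monomial ideal $J$ is again Borel-fixed in $\overline{R}$. Given a monomial generator $x^{\alpha'}$ of $K_i(J)$ (so $x_0^i x^{\alpha'}\in J$) and $x_\ell\mid x^{\alpha'}$ with $\ell\geq 1$, the Borel condition on $J$ applied to $x_\ell\cdot(x_0^i x^{\alpha'}/x_\ell)\in J$ yields $x_0^i(x_m x^{\alpha'}/x_\ell)\in J$ for every $m\leq\ell$, and hence $x_m x^{\alpha'}/x_\ell\in K_i(J)$ for $1\leq m\leq\ell$---exactly the Borel condition in $\overline{R}$. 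Combined with the identity and the hypothesis, this shows $\In_{\lex}(K_i(I))$ is Borel-fixed, i.e., $K_i(I)$ is in generic coordinates.

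For part (b), the identity shows that $G_i$ is a Gr\"obner basis of $K_i(I)$ if and only if $\{\In_{\lex}(\overline{f}):f\in G,\ d_0(f)=i\}$ generates $K_i(\In_{\lex}(I))$, and these initial terms are precisely the monomials $x^{\alpha'}$ for which $x_0^i x^{\alpha'}$ is a minimal generator of $\In_{\lex}(I)$. The refinement from the Conca--Sidman basis (allowing $d_0(f)\leq i$) down to $d_0(f)=i$ rests on a Borel-lifting argument: given a minimal generator $x_0^j x^{\gamma'}$ of $\In_{\lex}(I)$ with $j<i$ and $\deg x^{\gamma'}\geq i-j$, successive Borel moves trade $i-j$ of the variables $x_\ell$ ($\ell\geq 1$) dividing $x^{\gamma'}$ for $x_0$-factors, producing some $x_0^i x^{\delta'}\in\In_{\lex}(I)$ with $x^{\delta'}\mid x^{\gamma'}$. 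Hence $x^{\gamma'}$ is already divisible in $\overline{R}$ by a monomial coming from a $d_0=i$ minimal generator, making it redundant in any generating set for $K_i(\In_{\lex}(I))$.

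The main obstacle will be the boundary case $\deg x^{\gamma'}<i-j$, where purely-variable Borel lifting can only raise the $x_0$-power to $j+\deg x^{\gamma'}<i$, so the trade does not directly produce a $d_0=i$ minimal generator of $\In_{\lex}(I)$. In that regime the identity still forces $K_i(\In_{\lex}(I))=\overline{R}$, and one must separately verify that $x_0^i$ is itself a minimal generator of $\In_{\lex}(I)$ so that $\overline{x_0^i}=1$ appears in $G_i$ and covers the missing generator of $K_i(I)$. This is automatic for $i$ strictly below the initial degree $i_0=\min\{j:I_j\neq 0\}$---the regime ($i=0,1$) actually used later in the paper---but in general it demands a careful separate analysis of the minimal generators of $\In_{\lex}(I)$ in small $x_0$-degree.
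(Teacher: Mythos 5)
Your part (b) is essentially the paper's own argument in a cleaner packaging: the commutation rule $\In_{\lex}(K_i(I))=K_i(\In_{\lex}(I))$ (this is Proposition 6.1 in Green's notes \cite{G}, valid in any coordinates) together with a Borel-exchange step is exactly what the paper proves when it shows that $x_0^iM$ is a minimal generator of $\In(I)$ if and only if $M$ is a minimal generator of $\In(K_i(I))$; your use of Borel-fixedness of $\In(I)=\Gin(I)$ is legitimate since $I$ is assumed to be in generic coordinates. The boundary case you flag ($i$ exceeding the initial degree $\beta=\min\{j\mid I_j\neq 0\}$) is a real defect of the statement as written (for a reduced Gr\"obner basis $G_i$ can even be empty while $K_i(I)=\bar R$), but it is shared by the paper's own claim and is harmless for the way the result is used: Corollary 3.6 only needs $0\le i\le\beta$, and at $i=\beta$ the monomial $x_0^{\beta}$ is a minimal generator of $\In(I)$, so your ``separate analysis'' is only needed outside the relevant range.

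The genuine gap is in (a). You correctly show that $\In_{\lex}(K_i(I))=K_i(\Gin_{\lex}(I))$ is Borel-fixed, but you then conclude ``i.e., $K_i(I)$ is in generic coordinates.'' That inference is the converse of Galligo's theorem and is not available: in this paper ``in generic coordinates'' means that $\In_{\lex}(K_i(I))$ equals $\Gin_{\lex}(K_i(I))$, i.e.\ that the initial ideal is unchanged under a generic $g'\in GL_{r}(k)$ acting on $\bar R$; Borel-fixedness of the initial ideal is a necessary consequence of this, not a sufficient criterion, and the full statement is what is needed downstream (Corollary 3.6 replaces $\reg(\In_{\lex}(K_i(I)))$ by $M(K_i(I))=\reg(\Gin_{\lex}(K_i(I)))$, which is precisely the equality $\In=\Gin$ for $K_i(I)$). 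Your identity does not close this gap: for $g'\in GL_r(k)$ one has $g'K_i(I)=K_i(\tilde g I)$ with $\tilde g=1\oplus g'$ fixing $x_0$, so what must be proved is that $\In_{\lex}(\tilde g I)=\In_{\lex}(I)$ for generic $g'$; since such $\tilde g$ range only over a positive-codimension subgroup of $GL_{r+1}(k)$, the density of the open set defining $\Gin(I)$ does not give this. This is exactly the content of Proposition 3.3 of Conca--Sidman \cite{CS}, which the paper cites for (a) and which is established by an equivariance-plus-genericity argument rather than by a Borel-fixedness computation; as it stands, your proof of (a) establishes only the necessary condition and leaves the actual claim unproved.
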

\begin{proof}
(a) is in fact proved in Proposition~3.3 in \cite{CS}.
For a proof of (b),  it suffices to show that $\langle \In(G_i)\rangle = \In(K_i (I))$
by the definition of Gr\"obner bases. Since $G_i \subset K_i (I)$,
we only need to show that $\langle \In(G_i) \rangle \supset \In(K_i (I))$.
Now, we denote $\mathcal{G}(I)$ by the set of minimal generators of $I$.
Let $m \in \In(K_i (I))$ be a monomial. Then there is a monomial generator $M \in \mathcal{G}(\In(K_i (I)))$
such that $M$ divide $m$.

We claim that $x_0^i M \in \mathcal{G}(\In(I))$ if and only if $ M \in \mathcal{G}(\In(K_i (I))).$

If the claim is proved then we will be done. Indeed, for $ M \in \mathcal{G}(\In(K_i (I)))$,
we see that $x_0^iM \in \mathcal{G}(\In(I))$. This implies that there exists a polynomial
$f = x_0^i \bar{f} +g \in G$  with $d_0(g) < i$ such that
$$\In(f)=x_0^i\In(\bar f)=x_0^i M.$$
This means that $M=\In(\bar f) \in \langle \In(G_i) \rangle$. Thus we have $m \in \langle \In(G_i) \rangle$.\\
Here is a proof of the claim: suppose that $x_0^i M \in \mathcal{G}(\In(I))$ then we can say that $x_0^i M \in \In(I)$.
Thus there is a polynomial $f =x_0^i \bar{f} +g \in I$ such that $d_0(g) < i$ and $\In(f) =x_0^i \In(\bar f) = x_0^i M$.
By the definition of partial elimination ideals, we have that $\bar{f} \in K_i(I)$, which means $M \in \In(K_i(I))$.
Assume that $M \notin \mathcal{G}(\In(K_i(I)))$. Then for some monomial $N \in \mathcal{G}(\In(K_i(I)))$ such that
$N$ divide $M$.
This implies that
$$
x_0^i N \in \In(I) ~\mbox{and}~ x_0^i N \mid x_0^i M,
$$
which contradicts the fact that $x_0^i M$ is a minimal generator of $\In(I)$.
Thus $M$ is contained in $\mathcal{G}(\In(K_i(I)))$.

Conversely, suppose that there is $M \in \mathcal{G}(\In(K_i(I)))$ such that $x_0^i M \notin \mathcal{G}(\In(I))$.
Then we may choose a monomial $x_0^j N \in \mathcal{G}(\In(I))$ satisfying
\begin{equation}\label{eq:000001}
x_0 \nmid N ~\mbox{and}~x_0^j N \mid x_0^i M.
\end{equation}
Note that (\ref{eq:000001}) implies that $i \geq j \geq 0$.
Since $N \in \In(K_j(I))$ and $K_0(I) \subset K_1(I) \subset \cdots$, it is obvious that
$N \in \In(K_i(I))$ and $N$ divides $M$.
Now, we claim that $N$ can be chosen to be different from $M$. If $N = M$ then $j$ must be less than $i$.
Denote $N$ by $x_1^{j_1} \cdots x_r^{j_r}$ and choose $j_t\neq 0$. By (a), note that $K_{i}(I)$ is
in generic coordinates and so we may assume that $\In(K_i(I))$ has the Borel-fixed property.
Therefore, if we set
$N^{'}=N/x_{j_t}$ then $x_0^{j+1} N^{'} \in \In(I)$.
Replace $x_0^j N$ by $N^{''} =x_0^{j+1} N^{'}$. Then $N^{'} \in \In(K_{j+1}(I))$.
Since $j+1\leq i$, we can say that $N^{'} \in \In(K_i(I))$ and $N^{'}$ divides $M$ with $N^{'} \neq M$.
This contradicts the assumption that $M \in \mathcal{G}(\In(K_i(I)))$.
\end{proof}

\begin{remk}
The condition ``in generic coordinates" is crucial in Proposition~\ref{thm2} (b)
as the following example shows. Let $I = ( x_0 ^2, x_0 x_1, x_0x_2, x_3 )$ be a monomial ideal.
Then $G=\{ x_0 ^2, x_0 x_1, x_0x_2, x_3 \}$ is a Gr\"{o}bner basis for $I$.
Then we can easily check that
\begin{align*}
G_1&=\{ \bar{f} \mid f\in G \mbox{ with } d_0(f)\leq 1\}=(x_1, x_2, x_3),\\
G_1^{'}&=\{ \bar{f} \mid f\in G \mbox{ with } d_0(f)= 1\}= (x_1, x_2).
\end{align*}
This shows that $G_1^{'}$ is not a Gr\"obner basis for $K_1(I)$.
\end{remk}

We have the following corollary from Proposition~\ref{thm2}.
\begin{Cor}\label{cor:305}
For a homogeneous ideal $I\subset R=k[x_0,\ldots, x_r]$ {\it in generic coordinates}, we have
\[M(I)=\max \{ M(K_i(I))+i~ \mid~ 0\leq i\leq \beta\},\]
where $\beta=\min\{j ~\mid ~I_{j}\neq 0 \}$.
\end{Cor}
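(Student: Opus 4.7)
The plan is to reduce the computation of $M(I)$ to the Borel-fixed description of minimal generators of $\In_{\lex}(I)$ and then to factor each minimal generator through the partial elimination ideals. Since $I$ is in generic coordinates, $\In_{\lex}(I) = \Gin_{\lex}(I)$ is Borel-fixed, so $M(I)$ coincides with the largest degree appearing among the minimal monomial generators of $\In_{\lex}(I)$; by Proposition~\ref{thm2}(a) the same interpretation applies to $M(K_i(I))$ for every $i$.

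Every monomial $m \in R$ factors uniquely as $m = x_0^i M$ with $M \in \bar R = k[x_1,\ldots,x_r]$, and $\deg m = i + \deg M$. The key technical input, already established inside the proof of Proposition~\ref{thm2}, is the bijection
\[
  x_0^i M \in \mathcal{G}(\In_{\lex}(I)) \iff M \in \mathcal{G}(\In_{\lex}(K_i(I))).
\]
Partitioning the minimal generators of $\In_{\lex}(I)$ according to their $x_0$-exponent and taking maxima on both sides yields
\[
  M(I) = \max_{i\ge 0}\bigl\{\,i + M(K_i(I))\,\bigr\}.
\]

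It remains to cap the index $i$ at $\beta$. By Borel-fixedness and the definition of $\beta$, one has $x_0^\beta \in \In_{\lex}(I)$, so any monomial $x_0^i M$ with $i > \beta$ is divisible by $x_0^\beta$ and cannot be a minimal generator of $\In_{\lex}(I)$; equivalently, the remark following Definition~\ref{defpartial} gives $K_i(I) = \bar R$ for $i \ge \beta$. At $i = \beta$, the value $M(K_\beta(I)) = 0$ contributes $\beta$ to the maximum, which is precisely the degree of the minimal generator $x_0^\beta$. For $i < \beta$, $K_i(I)$ is a proper ideal of $\bar R$, and the bijection above identifies each of its minimal generators of degree $d$ with a minimal generator $x_0^i M$ of $\In_{\lex}(I)$ of degree $i+d$. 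Together these cases confirm that the maximum is attained in the range $0 \le i \le \beta$.

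I expect the only delicate point to be the edge case $M = 1$: for $i > \beta$ the formal quantity $i + M(K_i(I)) = i$ would still be defined (since $K_i(I) = \bar R$ has minimal generator $1$), but it does not correspond to an actual minimal generator of $\In_{\lex}(I)$, and capping the index at $\beta$ is exactly what removes that spurious contribution. Everything else follows routinely from Proposition~\ref{thm2} together with the Borel-fixed characterization of the regularity of $\Gin_{\lex}$.
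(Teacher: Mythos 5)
Your argument is correct and follows essentially the same route as the paper's proof: both rest on the generator-level correspondence $x_0^i M \in \mathcal{G}(\Gin_{\lex}(I)) \iff M \in \mathcal{G}(\Gin_{\lex}(K_i(I)))$ established inside the proof of Proposition~\ref{thm2}, together with the Borel-fixed characterization of $M(\cdot)$ as the top degree of minimal generators. Your explicit handling of the cap at $i=\beta$ (via $x_0^\beta \in \Gin_{\lex}(I)$ and $K_\beta(I)=\bar R$) is a slightly more careful rendering of a point the paper passes over quickly, but it is not a different method.
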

\begin{proof}
Note that $K_{\beta}(I)=\bar R$ for $\beta=\min\{j~\mid~ I_{j}\neq 0 \}$ by definition.
We know that $M(I)$ can be obtained from the maximal degree of generators in $\Gin(I)$.
Remember that $\mathcal{G}(I)$ is the set of minimal generators of $I$. Then by Proposition~\ref{thm2},
every generator of $\Gin(I)$ is of the form $x_0^i M$ where $M \in \mathcal{G} (\Gin(K_i (I)))$ for some $i$.
This means that $M(I) \leq M(\Gin(K_i (I)))+i$ for some $i$.
On the other hand, if for each $i$, we choose $M \in \mathcal{G}(K_i (I))$, then by Proposition~\ref{thm2},
$x_0^i M$ is contained in $\mathcal{G}(\Gin(I))$. Hence we conclude that
\[M(I)=\max \{ M(K_i(I))+i~ \mid~ 0\leq i\leq \beta\}.\]
\end{proof}

Corollary~\ref{cor:305} with the following theorem can be used to obtain the degree-complexities
of the smooth surface lying in a quadric hypersurface in $\p^4$. For a proof of this theorem, see
\cite[Theorem 4.4]{A}.

\begin{Thm}\label{curveM}
Let $C$ be a non-degenerate smooth curve of degree $d$ and genus $g(C)$ in $\p^{r}$ for some $r\ge 3$.
Then,
$$M(I_{C}) = \max \{ d , 1+ \binom{d-1}{2}-g(C)\}.$$
\end{Thm}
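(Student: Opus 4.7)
The plan is to apply Corollary~\ref{cor:305}, which for $I_C$ in generic coordinates gives
\[
M(I_C)=\max\bigl\{\,M(K_i(I_C))+i\ \bigm|\ 0\le i\le\beta\,\bigr\},
\]
and then to compute each $M(K_i(I_C))$ by analyzing the image of $C$ under a generic projection $\pi\colon\p^r\dashrightarrow\p^{r-1}$ from $[1,0,\ldots,0]$ via the scheme-theoretic description of Proposition~\ref{defset}.

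First I would reduce to the case $r=3$. For $r\ge 4$ the secant variety of the smooth curve $C$ has dimension at most three, so a generic point of $\p^r$ lies off $\mathrm{Sec}(C)$; hence $\pi|_C$ is an isomorphism onto a smooth non-degenerate curve of the same degree $d$ and genus $g$ in $\p^{r-1}$. By Proposition~\ref{defset} this forces $K_i(I_C)$ to define the empty scheme for $i\ge 1$, so only $K_0(I_C)=I_{\pi(C)}$ contributes meaningfully to the maximum, and induction on $r$ reduces the problem to $r=3$.

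For $r=3$ the image $\pi(C)\subset\p^2$ is a degree-$d$ plane curve with exactly $n:=\binom{d-1}{2}-g(C)$ ordinary nodes and no higher singularities, a classical consequence of the genus--degree formula for a generic projection. This yields
\[
K_0(I_C)=(F),\ \deg F=d,\qquad K_i(I_C)=\bar R\ \ (i\ge 2),
\]
so $M(K_0(I_C))=d$, while $M(K_i(I_C))+i=i\le\beta\le d$ for $i\ge 2$. The remaining task is to show $M(K_1(I_C))=n$, after which Corollary~\ref{cor:305} produces $M(I_C)=\max\{d,\,1+n\}=\max\{d,\,1+\binom{d-1}{2}-g\}$.

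The main obstacle is establishing $M(K_1(I_C))=n$. By Proposition~\ref{thm2}(a), $K_1(I_C)\subset k[x_1,x_2,x_3]$ is itself in generic coordinates, and its radical cuts out the $n$ nodes of $\pi(C)$. The subtlety is that the nodes are not in linear general position in $\p^2$ (they lie on the degree-$d$ curve), so one cannot simply invoke a general-position result for points. I would instead iterate Corollary~\ref{cor:305} once more and project the $n$ nodes from a generic point of $\p^2$ to $\p^1$: the image is a set of $n$ distinct points, cut out in $k[x_2,x_3]$ by a binary form of degree exactly $n$, so $K_0(K_1(I_C))$ has a generator in degree $n$, while $K_i(K_1(I_C))$ is trivial for $i\ge 1$ since this secondary projection is a bijection on the node set. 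This pins down $M(K_1(I_C))=n$ and, traced back through both projections, identifies the extremal monomial of $\Gin_{\lex}(I_C)$ as $x_0 x_1 x_3^{\,n-1}$ (after a suitable relabelling), consistent with the pattern highlighted in the introduction.
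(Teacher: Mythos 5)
Your skeleton (Corollary~\ref{cor:305} applied to a generic projection, with the node set of the plane image governing $K_1$) is the right one, and indeed it is how the argument in~\cite{A} — which this paper merely cites for Theorem~\ref{curveM} rather than reproving — is organized. But at every stage you replace the quantity that actually enters Corollary~\ref{cor:305}, namely the maximal degree of the minimal generators of $\Gin_{\lex}(K_i(I_C))$, by the set-theoretic information of Proposition~\ref{defset}. Knowing that $Z_i(C)=\emptyset$ only says that $\bar R/K_i(I_C)$ is Artinian; it gives no bound whatsoever on the generator degrees of $\Gin_{\lex}(K_i(I_C))$, so the assertion that those indices ``do not contribute meaningfully to the maximum'' is precisely what has to be proved. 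This gap occurs three times: in the reduction from $\p^r$ to $\p^3$ (the terms $M(K_i(I_C))+i$ for $1\le i<\beta$), again in $\p^3$, and again for $K_j(K_1(I_C))$, $j\ge 1$, after projecting the nodes to $\p^1$ (injectivity of that projection only gives an empty $Z_j$, not a degree bound; bounding these is essentially the Conca--Sidman analysis of finite point sets, which you do not invoke). Moreover the explicit claim ``$K_i(I_C)=\bar R$ for $i\ge 2$'' is false in general: in generic coordinates $K_i(I)=\bar R$ if and only if $i\ge\beta=\min\{j\mid (I_C)_j\neq 0\}$, and most smooth space curves of large degree lie on no quadric, so for $2\le i<\beta$ these are proper ideals with Artinian quotients whose contribution $M(K_i(I_C))+i$ must be controlled by a genuine argument.

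The key step $M(K_1(I_C))=n$ also needs more than ``the radical of $K_1(I_C)$ cuts out the $n$ nodes.'' To conclude that $K_0(K_1(I_C))$ is generated in degree exactly $n$ you need the degree-$n$ binary form vanishing on the image of the nodes to lie in $K_1(I_C)$, which in practice requires knowing that $K_1(I_C)$ is the saturated, reduced ideal of the nodes; this is the curve analogue of Theorem~\ref{thm4.1} (proved there via Green's Corollary 6.7 and the Kleiman--Lipman--Ulrich Fitting-ideal description of the double locus), and it is established, not assumed, in~\cite{A}. Without it both the lower bound ($x_2^{\,n}$ a minimal generator of $\Gin_{\lex}(K_1(I_C))$, via Proposition~\ref{thm2}) and the upper bound can fail as written. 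A small but telling inconsistency: tracing your own two projections through Proposition~\ref{thm2} yields the extremal monomial $x_0x_2^{\,n}$ of degree $1+n$, not $x_0x_1x_3^{\,n-1}$; the latter is not a ``relabelling'' of the former.
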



\section{Degree complexity of smooth irreducible surfaces in $\p^{4}$}\label{section_4}
Let $S$ be a non-degenerate smooth irreducible surface of degree $d$ and arithmetic genus $\rho_{a}(S)
$ in $\p^4$ and let $I_S$ be the defining ideal of $S$ in $R=k[x_0,\ldots,x_4]$.
In this section, we study the scheme structure of
$$
Z_i(S):=\Proj(\bar R/K_i(I_S)), \,\text{ where } \bar  R=k[x_1,x_2, x_3, x_4]
$$
arising from a generic projection in order to get a geometric
interpretation of the degree-complexity $M(I_S)$ of $S$ in $\p^4$ with
respect to the degree lexicographic order.

We recall without proof the standard facts concerning generic projections of surfaces in $\p^4$ to $\p^3$.

Let $S\subset \p^4$ be a non-degenerate smooth irreducible surface of degree $d$ and arithmetic genus $\rho_{a}(S)$
and $\pi:S\rightarrow \pi(S)\subset \p^3$ be a generic projection.

(a) The singular locus of $\pi(S)$ is a curve $Y_1(S)$ with only singularities a number $t$ of ordinary triple points
with transverse tangent directions. The inverse image $\pi^{-1}(Y_1(S))$ is a curve with only singularities $3t$ nodes,
$3$ nodes above each triple point of $Y_1(S)$ (see \cite{P}).
This implies (using Proposition~\ref{defset}) that the ideals $K_j(I_S)$ have finite colength if $j>2$.
This fact is used in the proofs of Propostion~\ref{thm4.2} and Theorem~\ref{max3}.

(b) If a smooth surface $S\subset \mathbb P^4$ is contained in a quadric hypersurface
then there are no ordinary triple points in $Y_1(S)$.
This implies that the double curve $Y_1(S)$ is smooth by (a).

(c) The double curve $Y_1(S)$ is irreducible unless $S$ is a projected Veronese surface in $\p^4$ (see \cite {MP}).

(d) The reduced induced scheme structure on $Y_1(S)$ is defined by the first Fitting ideal of the $\mathcal O_{\p^3}$-module
$\pi_{*}\mathcal O_{S}$ (see \cite {MP}).

(e) The degree of $Y_1(S)$ is $\binom{d-1}{2}-g(S\cap H)$ where $S\cap H$ is a general hyperplane section
and the number of apparent triple points $t$ is given in \cite{Lb} by
$$
t=\binom{d-1}{3}-g(S\cap H)(d-3)+2\chi(\mathcal O_S)-2.
$$

The following lemma shows that the Hilbert function of $I_{S}$ can be obtained from those of partial elimination ideals $K_i(I_S)$.
\begin{lem}\label{lem3.4}
Let $S \subset \p^{4}$ be a smooth surface with the
defining ideal $I_{S}$ in $R=k[x_0, x_1, \ldots , x_4]$. Consider a projection
$\pi_q:S\longrightarrow \p^3$ from a general point $q=[1,0,0,0,0] \notin S$.
Then,
\begin{equation*}\label{4}
H(R/I_S,m) = \sum_{i \geq 0} H(\bar{R}/K_i(I_{S}),m-i).
\end{equation*}
In particular,
$$
P_S(z) = P_{Z_0(S)}(z)+P_{Z_1(S)}(z-1)+P_{Z_2(S)}(z-2).
$$
\end{lem}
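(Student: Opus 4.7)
The plan is to decompose both $(I_S)_m$ and $R_m$ according to the $x_0$-exponent and then subtract. First I would record that under GLex with $x_0 > x_1 > \cdots > x_4$, among monomials of a given total degree the one with the largest $x_0$-exponent is the largest; consequently, for any \emph{homogeneous} $f \in R$ the leading exponent $d_0(f)$ equals the maximum $x_0$-exponent that appears in any term of $f$. Thus $\widetilde{K}_i(I_S)_m$ is exactly the $k$-subspace of $(I_S)_m$ whose monomial support lies in $\bigoplus_{j \leq i} x_0^{j}\bar{R}_{m-j}$, and the collection $\{\widetilde{K}_i(I_S)\}_{i\ge 0}$ is an increasing, exhaustive filtration of $I_S$ whose successive quotients are direct-sum compatible with the $x_0$-grading of $R$.

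The core step is to produce, for each $i$ and $m$, the short exact sequence of $k$-vector spaces
\begin{equation*}
0 \longrightarrow \widetilde{K}_{i-1}(I_S)_m \longrightarrow \widetilde{K}_{i}(I_S)_m \st{\varphi} K_{i}(I_S)_{m-i} \longrightarrow 0,
\end{equation*}
with the convention $\widetilde{K}_{-1}(I_S)=0$, where $\varphi$ sends $f = x_0^i \bar f + g$ (with $d_0(g) < i$) to $\bar f$. Surjectivity follows straight from Definition~\ref{defpartial}, and the kernel is $\widetilde{K}_{i-1}(I_S)_m$ by the interpretation of $d_0$ above. Telescoping from $i=0$ to $i=m$, and using $\widetilde{K}_m(I_S)_m = (I_S)_m$ (no degree-$m$ polynomial has a term of $x_0$-exponent exceeding $m$), yields
\begin{equation*}
\dim_k (I_S)_m \;=\; \sum_{i=0}^{m} \dim_k K_i(I_S)_{m-i}.
\end{equation*}

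The parallel direct sum decomposition $R_m = \bigoplus_{i=0}^{m} x_0^i \bar R_{m-i}$ yields $\dim_k R_m = \sum_{i=0}^m \dim_k \bar R_{m-i}$, and subtracting this term by term from the previous identity gives the Hilbert function formula $H(R/I_S,m) = \sum_{i \geq 0} H(\bar R/K_i(I_S),\, m-i)$. For the polynomial statement, I would invoke the standing fact (a): since the generic projection $\pi(S) \subset \p^3$ has only ordinary triple points, $K_i(I_S)$ has finite colength in $\bar R$ for every $i \geq 3$, so $P_{Z_i(S)} \equiv 0$ for $i \geq 3$; passing to Hilbert polynomials in the identity above then collapses the sum to $P_S(z) = P_{Z_0(S)}(z) + P_{Z_1(S)}(z-1) + P_{Z_2(S)}(z-2)$. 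No step presents a serious obstacle; the only care needed is in confirming the leading-exponent interpretation of $d_0(f)$ on homogeneous polynomials, which is what makes the filtration by $\widetilde K_\bullet$ behave like a graded $x_0$-weight filtration, and in invoking (a) for the truncation at $i = 2$.
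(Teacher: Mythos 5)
Your argument is correct and is essentially the paper's proof in expanded form: the graded short exact sequences $0 \rightarrow \widetilde{K}_{i-1}(I_S)_m \rightarrow \widetilde{K}_{i}(I_S)_m \rightarrow K_i(I_S)_{m-i} \rightarrow 0$, together with $R_m=\bigoplus_{i} x_0^i\bar R_{m-i}$, are exactly the structural content behind the combinatorial identity $\binom{m+d}{d}=\sum_{i=0}^{d}\binom{m-1+d-i}{d-i}$ that the paper invokes for the Hilbert-function equality. For the truncation at $i=2$ you appeal to the standard fact (a) (finite colength of $K_i(I_S)$ for $i\ge 3$), whereas the paper cites Ran's (dimension$+2$)-secant lemma \cite{R} to conclude $Z_i(S)=\emptyset$ and hence $P_{Z_i(S)}=0$ for $i\ge 3$; these justifications are interchangeable here, so the proof matches the paper's.
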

\begin{proof}
The equality on Hilbert functions basically comes from the following combinatorial identity
$$
\binom{m+d}{d} = \sum_{i=0} ^{d}\binom{m-1+d-i}{d-i}.
$$

For a smooth surface $S \subset \p^4$, $Z_i(S)=\emptyset$ for $i \geq 3$ by the (dimension +2)-secant lemma
(see \cite {R}) and so $\bar{R}/K_i(I_{S})$ is Artinian. Thus
$P_{Z_i(S)}(z) = 0 ~\mbox{for $i \geq 3$}$
(see \cite[Lemma 3.4]{A} for details).
\end{proof}

The following theorem says that the first partial elimination ideal
$K_1(I_S)$ gives the reduced induced scheme structure on the double
curve $Y_1(S)$ in $\p^3$ (i.e., $I_{Z_1(S)}=I_{Y_1(S)}$).
\begin{Thm}\label{thm4.1}
Suppose that $S$ is a reduced irreducible surface in $\p^4$. Then,
\begin{itemize}
 \item[(a)] the first partial elimination ideal
$K_1(I_S)$ is a saturated ideal, so we have $K_1(I_S)=I_{Z_1(S)}$;
 \item[(b)] if $S$ is a smooth surface contained in a
quadric hypersurface, then $K_1(I_S)=I_{Y_1(S)}$, which implies that $K_1(I_S)$ is a reduced ideal.
\end{itemize}
\end{Thm}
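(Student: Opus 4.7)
The plan for (a) is to realize $\bar R/K_1(I_S)$ as the right-hand term of a short exact sequence coming from the natural $x_0$-filtration of $R/I_S$ as a $\bar R$-module (via the finite generic projection $\pi\colon S\to\p^3$), and then to deduce saturation by a local cohomology computation. Setting $M:=R/I_S$ and letting $M_i\subset M$ be the image of $\bar R\oplus x_0\bar R\oplus\cdots\oplus x_0^i\bar R$, the unique decomposition $f=x_0^i\bar f+g$ of Definition~\ref{defpartial} forces $M_i/M_{i-1}\cong \bar R/K_i(I_S)$; in particular
$$
0\longrightarrow M_0\longrightarrow M_1\longrightarrow \bar R/K_1(I_S)\longrightarrow 0
$$
is exact, with $M_0=\bar R/K_0(I_S)=\bar R/(F)$ the coordinate ring of the irreducible hypersurface $\pi(S)\subset\p^3$.

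Write $\mathfrak m=(x_1,\ldots,x_4)\subset\bar R$. Then $M_0$ is Cohen--Macaulay of depth $3$, so $H^i_{\mathfrak m}(M_0)=0$ for $i<3$. Since $S$ is reduced, irreducible, and non-degenerate, $I_S$ is prime and misses every coordinate, so each $x_j$ is a non-zero-divisor on $M$; hence $H^0_{\mathfrak m}(M)=0$, and the inclusion $M_1\hookrightarrow M$ yields $H^0_{\mathfrak m}(M_1)=0$. Feeding these into the long exact sequence of local cohomology immediately gives $H^0_{\mathfrak m}(\bar R/K_1(I_S))=0$, i.e.\ $K_1(I_S)$ is saturated, and Proposition~\ref{defset} then identifies $K_1(I_S)=I_{Z_1(S)}$.

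For (b), the hypothesis that $S$ lies in a quadric rules out ordinary triple points, so by facts (a), (b) recalled in Section~\ref{section_4} the double curve $Y_1(S)$ is smooth and $\pi(S)$ is the transverse union of two smooth sheets along $Y_1$. In particular $K_2(I_S)=\bar R$, hence $M_1=M$, and the sheafification of $\bar R/K_1(I_S)$ becomes the coherent cokernel
$$
\mathcal F\;:=\;\pi_{*}\mathcal O_S/\mathcal O_{\pi(S)}
$$
on $\p^3$. At each $q\in Y_1$, local equations give $\pi(S)=\{uv=0\}$ and $Y_1=V(u,v)$, so $(\pi_{*}\mathcal O_S)_q=\mathcal O_{\p^3,q}/(u)\oplus \mathcal O_{\p^3,q}/(v)$ with $\mathcal O_{\pi(S),q}=\mathcal O_{\p^3,q}/(uv)$ sitting diagonally, and a direct check yields
$$
\mathcal F_q\;\cong\;\mathcal O_{\p^3,q}/(u,v)\;=\;\mathcal O_{Y_1,q}.
$$
Consequently the annihilator sheaf of the cyclic $\mathcal O_{\p^3}$-module $\mathcal F=\mathcal O_{\p^3}/\widetilde{K_1(I_S)}$ is $\mathcal I_{Y_1}$, i.e.\ $\widetilde{K_1(I_S)}=\mathcal I_{Y_1}$; since both $K_1(I_S)$ (by (a)) and $I_{Y_1(S)}$ are saturated, passing to graded global sections gives $K_1(I_S)=I_{Y_1(S)}$, which is manifestly a reduced ideal.

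The delicate point is in (b): one must verify that the in-a-quadric hypothesis really forces normal crossings at every point of $Y_1$, not merely generically. A pinch-point-type tangential contact of the two sheets would add a non-reduced thickening to $\mathcal F_q$ and destroy the pointwise identification with $\mathcal O_{Y_1,q}$. Once that geometric input is granted from the recalled facts on generic projections, both the filtration bookkeeping in (a) and the passage from a pointwise sheaf identification to equality of two saturated ideals in (b) are purely formal.
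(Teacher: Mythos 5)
Your part (a) is correct and is essentially the paper's argument in different packaging: your filtration $M_0\subset M_1\subset M=R/I_S$ with $M_i/M_{i-1}\cong(\bar R/K_i(I_S))(-i)$, combined with $\mathrm{depth}\,\bar R/(F)=3$ and the fact that $x_1$ is a nonzerodivisor on $R/I_S$, is a local-cohomology version of the paper's step of injecting $(K_1(I_S):\bar{\mathcal M})/K_1(I_S)$ into $\Tor_1^{\bar R}(\bar R/\bar{\mathcal M},K_0(I_S))=0$; both hinge only on $K_0(I_S)$ being principal. (Note the final identification $K_1(I_S)=I_{Z_1(S)}$ comes from the definition of $Z_1(S)$ plus saturatedness, not from Proposition~\ref{defset}, which is only set-theoretic.)

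Part (b), however, has a genuine gap, and it is exactly the point you flag at the end but then dismiss: you assume that at \emph{every} point $q\in Y_1(S)$ the image $\pi(S)$ is a transverse union of two smooth sheets, $\{uv=0\}$, and you claim this is ``granted from the recalled facts.'' It is not. Fact (b) of Section~\ref{section_4} only excludes ordinary triple points, which makes the \emph{curve} $Y_1(S)$ smooth; it says nothing about pinch points of the \emph{surface} $\pi(S)$, which are smooth points of $Y_1(S)$ at which the local model is the Whitney umbrella $\{y^2=x^2z\}$ rather than normal crossings. Pinch points do occur for surfaces lying on quadrics: the cubic scroll (the $d=3$ case of Proposition~\ref{mainthm2}) lies on quadrics and its generic projection is a ruled cubic with two pinch points on its double line, and the $(2,2)$ complete intersection projects to a quartic with a double conic carrying pinch points. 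So your pointwise identification $\mathcal F_q\cong\mathcal O_{Y_1,q}$ is unproved precisely at those points, which is where the paper's citation of Kleiman--Lipman--Ulrich and Mezzetti--Portelli (that $\mathrm{Fitt}_1(\pi_*\mathcal O_S)=\mathrm{Ann}(\pi_*\mathcal O_S/\mathcal O_{\pi(S)})$ defines the \emph{reduced} structure on $Y_1(S)$) does the work uniformly. The gap is repairable without those references, but by a computation, not by appeal to the recalled facts: in the umbrella model $A=k[x,y,z]/(y^2-x^2z)\hookrightarrow B=k[s,t]$, $x\mapsto s$, $y\mapsto st$, $z\mapsto t^2$, one has $B=A+At$ and $B/A\cong A/(x,y)\cong\mathcal O_{Y_1,q}$, so in fact no ``non-reduced thickening'' arises at a pinch point --- but this must be verified (or KLU must be invoked) before your passage from the sheaf identity to $K_1(I_S)=I_{Y_1(S)}$ is legitimate.
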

\begin{proof}
(a) Assume that $S$ is a reduced irreducible surface in $\p^4$ of degree $d$.
Take a general point $q\in \mathbb P^4$; we may assume $q=[1,0,\ldots,0]$.
Then the generic projection of $S$ into $\mathbb P^3$ from the point $q$ is defined
by a single polynomial $F\in k[x_1,x_2,x_3, x_4]$ of degree $d$ and $K_0(I_S)=(F)$, which is a reduced ideal.

Let $\bar {\mathcal M}=(x_1,x_2,x_3, x_4)$ be the irrelevant maximal ideal of
$\bar{R}=k[x_1,x_2,x_3, x_4]$. By the definition of saturated ideal,
$K_1(I_S)$ is saturated if and only if
$$(K_1(I_S):\bar {\mathcal
M})=K_1(I_S).$$ Hence it is enough to show that
\[(K_1(I_S):\bar {\mathcal M})/K_1(I_S)=0.\]
For the proof, consider the Koszul complex
\[\cdots \rightarrow \mathcal K_m^{-p-1}\rightarrow \mathcal K_m^{-p}
\rightarrow \mathcal K_m^{-p+1}\rightarrow \cdots,\] where
$\mathcal K_m^{-p}=\bigwedge^{p}\bar {\mathcal M}\bigotimes
K_0(I_S)_{m-p}$. From Corollary 6.7 in \cite{G}, the $\bar
R$-module $(K_1(I_S):\bar {\mathcal M})_d/K_1(I_S)_d$ injects into
$H^{-1}(\mathcal K^{\bullet}_{d+3})$ for each $d$. Note that
\[H^{-1}(\mathcal K^{\bullet}_{d+3})=H(\bigwedge^{1}\bar {\mathcal M}\bigotimes
K_0(I_S)_{d+2})=\Tor_1^{\bar R}(\bar R/\bar {\mathcal
M},K_0(I_S))_{d+3}.\] Since the ideal $K_0(I_S)$ is generated by a
single polynomial $F$, we have that \[\Tor_1^{\bar R}(\bar
R/\bar {\mathcal M},K_0(I_S))=0.\] This proves that
$(K_1(I_S):\bar
{\mathcal M})/K_1(I_S)=0$, as we wished.\\

(b) Since $S$ is contained in a quadric hypersurface and the center of projection is
outside a quadric, we have a surjection $\varphi: \bar R(-1)\oplus \bar R\rightarrow R/I_S$
as a $\bar R$-module homomorphism with the following diagram:
\begin{equation*}
\begin{array}{ccccccccccccccccccccccc}
&&0&&0&&0&&\\[1ex]
&&\downarrow &&\downarrow&&\downarrow&&\\[1ex]
0&\longrightarrow& K_0(I_S) & \longrightarrow & \bar{R}& \longrightarrow &
\bar{R}/K_0(I_S)&\longrightarrow & 0 \\[1ex]
&&\downarrow && \downarrow && \,\,\,\downarrow & \\[1ex]
0&\longrightarrow& \widetilde{K}_{1}(I_S) & \longrightarrow & \bar{R}\oplus
\bar{R}(-1)& \st{{\varphi}}  & R/I_S
&\longrightarrow & 0\\[1ex]
&&\downarrow &&\downarrow && \downarrow &&\\[1ex]
0&\longrightarrow& K_1(I_S)(-1) & \longrightarrow & \bar{R}(-1)&\longrightarrow& \bar{R}/K_1(I_S)(-1)\,
&\longrightarrow &0\\[1ex]
&&\downarrow &&\downarrow&&\downarrow&&\\[1ex]
&&0&&0&&0&&
\end{array}
\end{equation*}
where $\widetilde{K}_{1}(I_S)=\{f \in I_S\,\mid\, d_0(f)\leq 1\}$ is an $\bar R$-module.
Let $\mathcal O_{Z_1(S)}$ be the sheafification of $\bar{R}/K_{1}(I_S)$.
By sheafifying the rightmost vertical sequence, we have
\begin{equation}\label{exact seq}
0 \longrightarrow \mathcal{O}_{\pi(S)} \longrightarrow
\pi_{*}\mathcal{O}_{S} \longrightarrow
\mathcal O_{Z_1(S)}(-1) \longrightarrow 0.
\end{equation}

Let $\mathcal I_{Z_1(S)}=\mathcal{K}_{1}(I_S)$ be the sheafification of the ideal
$K_{1}(I_S)$. In \cite[(3.4.1), p. 302]{KLU}, S. Kleiman, J. Lipman and B. Ulrich proved that
\begin{equation*}
\mathcal I_{Y_1(S)}
= \mathrm{Fitt}_{1}^{\p^3}(\pi_{*}\mathcal{O}_S)
 =\mathrm{Fitt}_{0}^{\p^3}(\pi_{*}\mathcal{O}_S/\mathcal{O}_{\pi(S)})
 =\mathrm{Ann}_{\p^3}(\mathcal O_{Z_1(S)}(-1)),
\end{equation*}
and this defines {\it the reduced scheme structure} on $Y_1(S)$ (see \cite[p. 3] {MP}).

On the other hand, from the sequence~(\ref{exact seq}), we have
$$\mathcal{I}_{Y_1(S)} = \mathrm{Ann}_{\p^3}(\mathcal O_{Z_1(S)}(-1)) = \mathcal{K}_{1}(I_S)=\mathcal I_{Z_1(S)}.$$
Then it follows from (a) that
$$I_{Z_1(S)}=K_{1}(I_S)^{\mathrm{sat}}=K_{1}(I_S)=I_{Y_{1}(S)}.$$
Since $I_{Y_1(S)}$ is a reduced ideal, we conclude that $I_{Z_1(S)}=K_{1}(I_S)$ is also a reduced ideal.
\end{proof}

If $S\subset \p^4$ is contained in a quadric hypersurface, then by Theorem~ \ref{thm4.1},
$K_1(I_S)$ is saturated and reduced. So, it defines the reduced scheme structure on $Y_1(S)$.
Note also that the double curve $Y_1(S)$ is smooth (see the standard fact (b) in the beginning of this section).
We use this fact to prove the following theorem.

\begin{Thm}\label{max3}
Let $S$ be a smooth irreducible surface of degree $d$ lying on a
quadric hypersurface in $\p^4$. Let $Y_1(S)$ be the double curve of genus $g(Y_1(S))$ defined by
a generic projection $\pi$ of $S$ to $\p^3$. Then, we have the following;
\begin{itemize}
\item[(a)]
$M(I_S)= \max \{ d, 1+\deg Y_1(S), 2+\binom{\deg Y_1(S)-1}{2}-g(Y_1(S))$\};
\item[(b)]
$M(I_S)$ can be obtained at one of monomials
$$
x_1^d, \,\,x_0x_2^{\deg Y_1(S)},\,\, x_0 x_1 x_3^{\binom{\deg Y_1(S)-1}{2}-g(Y_1(S))}.
$$
\end{itemize}
\end{Thm}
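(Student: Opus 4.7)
The plan is to apply Corollary~\ref{cor:305}, which reduces $M(I_S)$ to a maximum over the partial elimination ideals:
\[ M(I_S) \;=\; \max_{0 \le i \le \beta}\bigl\{ M(K_i(I_S)) + i \bigr\}, \qquad \beta \;=\; \min\{j : (I_S)_j \ne 0\}. \]
Since $S$ lies on a quadric, $\beta = 2$, so only three slots need analysis. For each, I will identify $K_i(I_S)$ geometrically, compute its degree-complexity, and read off the leading monomial of $\Gin_{\lex}(K_i(I_S))$ that realizes it; multiplying by $x_0^i$ then produces a minimal generator of $\Gin_{\lex}(I_S)$ by Proposition~\ref{thm2}(b), simultaneously settling (a) and (b).

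The three slots are handled as follows. For $i=0$, the generic projection of $S$ to $\p^3$ is a hypersurface of degree $d$, so $K_0(I_S) = (F)$ is principal with $\deg F = d$; in generic coordinates its leading monomial in $\bar R = k[x_1,x_2,x_3,x_4]$ is $x_1^d$. For $i=1$, Theorem~\ref{thm4.1}(b) combined with standard fact~(b) gives $K_1(I_S) = I_{Y_1(S)}$, the saturated ideal of the smooth irreducible double curve $Y_1(S) \subset \p^3$; by Proposition~\ref{thm2}(a) this ideal is already in generic coordinates, so Theorem~\ref{curveM} applies and yields
\[ M(K_1(I_S)) = \max\bigl\{\deg Y_1,\; 1 + \tbinom{\deg Y_1 - 1}{2} - g(Y_1)\bigr\}. \]
For $i=2$, the quadric $Q \in (I_S)_2$ has $\In_{\lex}(Q) = x_0^2$ in generic coordinates, hence $\overline{Q} = 1 \in K_2(I_S)$, so $K_2(I_S) = \bar R$ and this slot contributes only the monomial $x_0^2$ of degree $2$, dominated by $d \ge 2$.

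For part (b) the monomial witnesses are obtained by tracing the proof of Theorem~\ref{curveM}: applying Corollary~\ref{cor:305} recursively to the smooth curve $Y_1(S) \subset \p^3 = \Proj \bar R$, the $K_0$-slot of $Y_1$ corresponds to the plane curve image under a further generic projection, a principal ideal of degree $\deg Y_1$ with leading monomial $x_2^{\deg Y_1}$; the $K_1$-slot of $Y_1$ is the zero-dimensional apparent-nodes scheme of length $\binom{\deg Y_1 - 1}{2} - g(Y_1)$, whose reduction to $\Proj k[x_3,x_4]$ yields the leading monomial $x_3^{\binom{\deg Y_1 - 1}{2} - g(Y_1)}$. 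Re-inserting the $x_1$ and then $x_0$ factors recovers exactly the three monomials in (b), and taking the maximum of the three bounds in (a) finishes the proof.

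The main obstacle is the recursive bookkeeping inside the $K_1$-slot: one must check that the candidates $x_2^{\deg Y_1}$ and $x_1 x_3^{\binom{\deg Y_1 - 1}{2} - g(Y_1)}$ really are \emph{minimal} generators of $\Gin_{\lex}(K_1(I_S))$ rather than mere elements, and that the apparent-node scheme of the generic projection of $Y_1$ has exactly the prescribed length. Both facts follow from the Borel-fixed structure of generic initial ideals together with the smoothness of $Y_1(S)$ furnished by standard fact~(b), but they require a careful unpacking of the chain of partial eliminations that underlies Theorem~\ref{curveM}.
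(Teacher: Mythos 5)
Your proposal is correct and follows essentially the same route as the paper: Corollary~\ref{cor:305} with $\beta=2$, the identifications $K_0(I_S)=(F)$, $K_1(I_S)=I_{Y_1(S)}$ via Theorem~\ref{thm4.1}(b) and the smoothness of the double curve, $K_2(I_S)=\bar R$, then Theorem~\ref{curveM} for the $i=1$ slot. For part (b) the paper likewise traces the monomials $x_1^d$, $x_0x_2^{\deg Y_1(S)}$ and $x_0x_1x_3^{\binom{\deg Y_1(S)-1}{2}-g(Y_1(S))}$ through Proposition~\ref{thm2} and the nodes of the plane projection of $Y_1(S)$, exactly as you outline.
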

\begin{proof}
Note that by Corollary~\ref{cor:305},
$$
M(I_S)=\underset{0\leq i \leq \beta}{\max} \{\reg(\Gin(K_i(I_S)))+i \},
$$
where $\beta=\min\{j\mid K_j(I_S)= \bar R\}$.
Since $S$ is contained in a quadric hypersurface, $\Gin(I_S)$ contains the monomial $x_0^2$.
This means that $\Gin(K_2(I_S))= \bar R $.
On the other hand, $\Gin(K_0(I_S))=(x_1^d)$ by the Borel fixed property because $\pi(S)$ is
a hypersurface of degree $d$ in $\p^3$ and $I_{\pi(S)}=K_0(I_S)$.
Thus $\Gin(I_S)$ is of the form
$$
(x_0 ^2, x_0  g_1, x_0 g_2, \ldots , x_0 g_m, x_1^d).
$$
Note that $g_1, \ldots g_m$ are monomial generators of $\Gin(K_1(I_S))=\Gin(I_{Y_1(S)})$
by Proposition~\ref{thm2}.

Therefore, by Theorem~\ref{curveM},
$$
\reg(\Gin(K_1(I_S)))=\max\{ \deg Y_1(S), 1+\binom{\deg
Y_1(S)-1}{2}-g(Y_1(S)) \}$$ and consequently,
$$
M(I_S)=\max \{ d, 1+\deg Y_1(S), 2+\binom{\deg Y_1(S)-1}{2}-g(Y_1(S))\}.
$$
For a proof of (b), consider $\Gin(K_1(I_S))=\langle g_1, g_2, \ldots, g_m \rangle$
in (a).
Note that the double curve $Y_1(S)$ is smooth in $\p^3$. By the similar argument used in (a),
$\Gin(K_1(I_S))$ contains $x_2^{\deg(Y_1(S))}$ because the image of $Y_1(S)$ under a generic projection
to $\p^2$ is a plane curve of degree $\deg(Y_1(S))$. Finally, consider all monomial generators of the form
$x_1\cdot h_j(x_2, x_3, x_4)$ in $\{ g_1, g_2, \ldots, g_m \}$. Then, $\{h_j(x_2, x_3, x_4)\mid 1\le j \le m \}$
is a minimal generating set of $\Gin(K_1(I_{Y_1(S)}))$ by Proposition~\ref{thm2}. Recall that $K_1(I_{Y_1(S)})$
defines $\binom{\deg Y_1(S)-1}{2}-g(Y_1(S))$ distinct nodes in $\p^2$.
So, $\Gin(K_1(I_{Y_1(S)}))$ should contain the monomial $x_3^{\binom{\deg Y_1(S)-1}{2}-g(Y_1(S))}$
(see also \cite[Corollary 5.3]{CS}).
Therefore, $\Gin(I_S)$ contains monomials $x_1^d, x_0x_2^{\deg(Y_1(S))}$ and
$x_0x_1x_3\,^{\binom{\deg Y_1(S)-1}{2}-g(Y_1(S))}$.
\end{proof}

\begin{remk}
In the proof of Theorem~\ref{max3}, we showed that if a smooth irreducible surface $S$
is contained in a quadric hypersurface
then $M(I_S)$ is determined by two partial elimination ideals $K_0(I_S)$ and $K_1(I_S)$
since $K_i(I_S)=\bar R$ for all $i\geq 2$.
\end{remk}

The following theorem shows that if $d\geq 6$ then $M(I_S)$ is determined by the degree complexity
of the first partial elimination ideal $K_1(I_S)$.

\begin{prop}\label{mainthm2}
Let $S$ be a smooth irreducible surface of degree $d$ in $\p^4$. Suppose that $S$ is contained
in a quadric hypersurface. Then
\[
M(I_{S})=
\begin{cases}
3  &\text{if $S$ is a rational normal scroll with $d=3$}\\
4  &\text{if $S$ is a complete intersection of (2,2)-type}\\
5  &\text{if $S$ is a Castelnuovo surface with $d=5$}\\
2+\binom{\deg Y_1(S)-1}{2}-g(Y_{1}(S))  & \,\,\text {for}\,\,d\ge 6
\end{cases}
\]
where $Y_1(S)\subset \p^3$ is a double curve of degree $\binom{d-1}{2}-g(S\cap H)$
under a generic projection of $S$ to $\p^3$.
\end{prop}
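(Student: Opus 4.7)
The strategy is to combine Theorem~\ref{max3} with the classical classification of smooth irreducible surfaces in $\p^4$ lying on a quadric hypersurface. By Theorem~\ref{max3}(a),
$$M(I_S)=\max\left\{d,\; 1+\deg Y_1(S),\; 2+\binom{\deg Y_1(S)-1}{2}-g(Y_1(S))\right\},$$
so the problem reduces to computing $\deg Y_1(S)$ and $g(Y_1(S))$ in each case and comparing the three quantities. Such an $S$ is one of: (i) a rational cubic scroll ($d=3$), (ii) a $(2,2)$ complete intersection ($d=4$), (iii) a Castelnuovo surface of degree $5$ (projectively Cohen-Macaulay with $\alpha=3$), (iv) a complete intersection of type $(2,\alpha)$ with $\alpha\ge 3$, or (v) projectively Cohen-Macaulay of degree $2\alpha-1$ with $\alpha\ge 4$. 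Cases (i)--(iii) are precisely the exceptional ones in the statement, while (iv) and (v) together exhaust $d\ge 6$.

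For the three exceptional cases I would compute $\deg Y_1(S)=\binom{d-1}{2}-g(S\cap H)$ directly, using $g(S\cap H)=0,1,2$ for the twisted cubic, the elliptic quartic, and the Castelnuovo quintic in $\p^3$, respectively. This yields $\deg Y_1(S)=1,2,4$, and since $Y_1(S)$ is smooth by standard fact (b), a short calculation shows that $2+\binom{\deg Y_1(S)-1}{2}-g(Y_1(S))\le d$ and $1+\deg Y_1(S)<d$. Hence $M(I_S)=d$ in each exceptional case.

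For $d\ge 6$ I need to prove that the third quantity strictly dominates the other two. In case (iv) the $(2,\alpha)$ complete intersection formula for the plane section gives $g(S\cap H)=(\alpha-1)^2$, hence $\deg Y_1(S)=\alpha(\alpha-1)$; in case (v) an analogous computation from the Hilbert polynomial of $S$ (whose $h$-vector has the shape $(1,2,2,\ldots,2,1)$) produces $\deg Y_1(S)$ as an explicit polynomial in $\alpha$. For the genus $g(Y_1(S))$ I would take Euler characteristics in the short exact sequence
$$0\longrightarrow \mathcal O_{\pi(S)}\longrightarrow \pi_*\mathcal O_S\longrightarrow \mathcal O_{Y_1(S)}(-1)\longrightarrow 0$$
established in the proof of Theorem~\ref{thm4.1}(b); since $\chi(\mathcal O_S)$ and $\chi(\mathcal O_{\pi(S)})$ are determined by $d$ and the type of $S$, this gives $g(Y_1(S))$ as a polynomial in $\alpha$. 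Substituting into the three quantities and simplifying, the third entry becomes a quartic in $\alpha$ which, for $\alpha\ge 3$ in (iv) and $\alpha\ge 4$ in (v), clearly dominates both $d$ and $1+\deg Y_1(S)$.

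I expect the genus computation in case (v) to be the main obstacle, since there is no adjunction shortcut as in the complete intersection case (iv). Controlling it requires careful use of the minimal free resolution of $S$ together with the exact sequence above, supported by the fact (Theorem~\ref{thm4.1}(b) plus standard fact (b)) that $Y_1(S)$ is smooth so no triple-point corrections arise. Once $g(Y_1(S))$ is known explicitly, the final comparison of polynomials in $\alpha$ is an elementary verification.
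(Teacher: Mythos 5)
Your proposal reaches the right conclusion, but it handles the main case $d\ge 6$ by a genuinely different route than the paper. After invoking Theorem~\ref{max3}(a) (as the paper does), you appeal to the classification of smooth surfaces on a quadric (Remark~\ref{CM surface}, i.e.\ complete intersections of $(2,\alpha)$-type or projectively Cohen--Macaulay of degree $2\alpha-1$) and then compute $\deg Y_1(S)$ and $g(Y_1(S))$ explicitly as polynomials in $\alpha$ from the minimal free resolution and the Euler characteristic of the sequence $0\to\mathcal O_{\pi(S)}\to\pi_*\mathcal O_S\to\mathcal O_{Y_1(S)}(-1)\to 0$; this is feasible and is in effect the computation the paper defers to Theorem~\ref{cor:01} and Proposition~\ref{thm4.2}. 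The paper instead settles $d\ge 6$ uniformly, with no classification: it uses Castelnuovo's bound $g(S\cap H)\le\pi(d,3)$ to show $d\le 1+\deg Y_1(S)$ once $d\ge 6$, and the genus bound for the space curve $Y_1(S)$ (of degree $\ge 5$) to show $1+\deg Y_1(S)\le 2+\binom{\deg Y_1(S)-1}{2}-g(Y_1(S))$, so the third term wins without ever computing $g(Y_1(S))$. Your route buys explicit closed formulas in $\alpha$ (recovering Theorem~\ref{cor:01} along the way) at the cost of depending on the structure theorem and on a genuinely more laborious genus computation in the non-complete-intersection case; the paper's route is shorter and more robust. Your treatment of the exceptional cases matches the paper's, except for one small slip: for the Castelnuovo quintic one has $\deg Y_1(S)=4$, so $1+\deg Y_1(S)=5=d$ rather than $1+\deg Y_1(S)<d$; the conclusion $M(I_S)=5$ is unaffected since $d$ itself appears in the maximum, but you should weaken that strict inequality to $\le$.
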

\begin{proof}
Since $K_2(I_S)=\bar{R}$, Theorem ~\ref{max3} implies that
\begin{equation*}
M(I_S)= \max \{ d, 1+\deg Y_1(S), 2+\binom{\deg
Y_1(S)-1}{2}-g(Y_1(S))\}.
\end{equation*}

If $\deg Y_{1}(S)\geq 5$ then by the genus bound,
$$1 + \deg Y_1(S) \leq 2 +\binom{\deg Y_1(S)-1}{2}-g(Y_1(S)).$$

We claim that if $d\geq 6$, then $d\leq 1 + \deg Y_1(S)$.
Notice that from our claim, we have the degree complexity of a surface lying on
a quadric hypersurface in $\p^4$  for $d\geq 6$ as follows;
$$M(I_S)=2+\binom{\deg Y_1(S)-1}{2}-g(Y_1(S)).$$
Note again that
\[
g(S\cap H) \leq \pi(d,3) =
\begin{cases}
(\frac{d}{2}-1)^{2}             &\text{if $d$ is even;}\\
(\frac{d-1}{2})(\frac{d-3}{2})  &\text{if $d$ is odd.}
\end{cases}
\]
Then we can show that $\pi(d,3) \leq \binom{d-1}{2}-d+1$ if $d=\deg(S\cap H)\geq 6$.
Thus, if $d\geq 6$ then
\begin{equation*}
d\leq 1 + \binom{d-1}{2} - g(S\cap H)=1+\deg Y_1(S).
\end{equation*}
So, our claim is proved and only three cases of $d=3,4,5$ are remained.

\medskip

Case 1: If $\deg S=3$ then $S$ is a rational normal scroll with
$g(S\cap H)=0$ and the double curve $Y_1(S)$ is a line. So, by simple computation, $M(I_S)=3$.

\medskip

Case 2:
If $\deg S=4$ then $S$ is a complete intersection of (2,2)-type with $g(S \cap H)=1$ and
the double curve $Y_1(S)$ is a plane conic of $\deg Y_1(S)=2$. So, by simple computation,
$M(I_S)=4$.
\medskip

Case 3:
If $\deg S=5$ then $S$ is a Castelnuovo surface with $g(S\cap H)=2$ and the double curve
$Y_1(S)\subset \p^3$ is a smooth elliptic curve of degree $4$.
In this case, we can also compute
$$M(I_S)=5=\deg S > 2+\binom{\deg Y_1(S)-1}{2}-g(Y_1(S))=4.$$
\end{proof}

\begin{prop}\label{thm4.2}
Let $S$ be a smooth irreducible surface of degree $d$ and arithmetic
genus $\rho_{a}(S)$ in $\p^4$. Let $Y_i(S)$ be the multiple locus
defined by a generic projection of $S$ to $\p^3$ for $i\ge 0$.
Assume that $S$ is contained in a quadric hypersurface.
Then, the following identity holds;
$$g(Y_{1}(S))=\binom{d-1}{3}-\binom{d-1}{2}+g(S\cap H)-\rho_{a}(S)+1.$$
\end{prop}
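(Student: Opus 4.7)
The plan is to apply Lemma~\ref{lem3.4} to express $P_S(z)$ as a sum of contributions from the partial elimination schemes $Z_0(S), Z_1(S), Z_2(S)$, to identify each of those contributions geometrically using the quadric hypothesis---which eliminates the $Z_2(S)$ term and, by Theorem~\ref{thm4.1}(b), realizes $Z_1(S)$ as the smooth double curve $Y_1(S)$---and finally to read off $g(Y_1(S))$ by matching constant terms of the resulting Hilbert-polynomial identity.

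The first step is to show $P_{Z_2(S)}\equiv 0$. Since $S$ is contained in a quadric, the proof of Theorem~\ref{max3} shows that $x_0^2\in \Gin_{\lex}(I_S)$ in generic coordinates; writing a corresponding $f=x_0^2\bar f + g\in I_S$ with $d_0(g)<2$ forces $\bar f$ to be a nonzero scalar, whence $K_2(I_S)=\bar R$ and $Z_2(S)=\emptyset$. Next, $K_0(I_S)$ defines the image $\pi(S)\subset\p^3$, a hypersurface of degree $d$ with sectional genus $\binom{d-1}{2}$ and arithmetic genus $\binom{d-1}{3}$, giving
\[
P_{Z_0(S)}(z)=\tfrac{d}{2}z^2+\left(\tfrac{d}{2}+1-\tbinom{d-1}{2}\right)z+1+\tbinom{d-1}{3}.
\]
Theorem~\ref{thm4.1}(b) identifies $Z_1(S)$ with the smooth curve $Y_1(S)$ of degree $\binom{d-1}{2}-g(S\cap H)$ (standard fact (e)) and genus $g(Y_1(S))$, so
\[
P_{Z_1(S)}(z)=\left(\tbinom{d-1}{2}-g(S\cap H)\right)z+1-g(Y_1(S)).
\]
Writing $P_S(z)$ from $d$, $g(S\cap H)$ and $\rho_a(S)$ in the usual way,
\[
P_S(z)=\tfrac{d}{2}z^2+\left(\tfrac{d}{2}+1-g(S\cap H)\right)z+1+\rho_a(S),
\]
substituting into $P_S(z)=P_{Z_0(S)}(z)+P_{Z_1(S)}(z-1)$, and comparing constant terms immediately produces the claimed identity for $g(Y_1(S))$.

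The quadratic and linear coefficients serve as automatic consistency checks (the linear one reproduces standard fact (e) on $\deg Y_1(S)$), so no further computation is required there. The only mildly delicate point is the vanishing $P_{Z_2(S)}\equiv 0$, which hinges essentially on the quadric hypothesis through the structure of $\Gin_{\lex}(I_S)$; everything else is routine bookkeeping with Hilbert polynomials.
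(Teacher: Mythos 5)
Your proposal is correct and follows essentially the same route as the paper: it invokes Lemma~\ref{lem3.4}, kills the $Z_2(S)$ contribution via the quadric hypothesis, identifies $Z_1(S)$ with the double curve $Y_1(S)$ through Theorem~\ref{thm4.1}(b), and extracts $g(Y_1(S))$ from the constant terms, which is exactly the paper's evaluation of the Hilbert-polynomial identity at $z=0$. The only cosmetic difference is that you write out the full polynomials $P_{Z_0(S)}$, $P_{Z_1(S)}$, $P_S$ before comparing, whereas the paper substitutes $z=0$ directly.
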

\begin{proof}
Let $P_S(z)$ be the Hilbert polynomial of a smooth irreducible
surface of degree $d$ and arithmetic genus $\rho_{a}(S) $. Since $Y_2(S)=\emptyset$, $P_{Y_2(S)}(z)=0$
and, by Lemma~\ref{lem3.4},
$$
P_S(z)= P_{Y_{0}(S)}(z)+ P_{Y_{1}(S)}(z-1).
$$
Plugging $z=0$, $P_S(0)=\rho_a(S)+1, P_{Y_0(S)}(0)= \binom{d-1}{3}+1,$ and
$$
P_{Y_1(S)}(-1)=-\deg Y_1(S)+1-g(Y_1(S))= -\binom{d-1}{2}+g(S\cap H)+1-g(Y_1(S)).$$
Therefore, we have the following identity:
$$g(Y_{1}(S))=\binom{d-1}{3}-\binom{d-1}{2}+g(S\cap H)-\rho_{a}(S)+1.$$
\end{proof}

\begin{remk}\label{genusdouble}
By Proposition ~\ref{thm4.2}, when $d\ge 6$, $M(I_S)$ can be expressed with only three invariants of $S$:
its degree, sectional genus, and arithmetic genus, as follows:
$$M(I_S)=\binom{\binom{d-1}{2}-g(S \cap H)-1}{2}- \binom{d-1}{3}+\binom{d-1}{2}-g(S\cap H)+\rho_a(S)+1.$$ \qed
\end{remk}

In order to compute $M(I_S)$ in terms of degrees of defining equations as A. Conca and J. Sidman did in \cite {CS},
we need the following remark. This shows that a smooth surface in $\p^4$ has a nice algebraic structure
when it is contained in a quadric hypersurface.
\begin{remk}\label{CM surface}
Let $S$ be a locally Cohen-Macaulay surface lying on a quadric
hypersurface $Q$ in $\mathbb P^4$. Then $S$ satisfies one of
following conditions (see \cite[Theorem 2.1]{K});
 \begin{itemize}
  \item[(a)] $S$ is a complete intersection of $(2,\alpha)$-type.
   \begin{itemize}
    \item[(i)] $I_S=(Q, F)$, where $F$ is a polynomial of degree $\alpha$.
    \item[(ii)] $\reg(S)=\alpha+1$.
   \end{itemize}
   \item[(b)] $S$ is projectively Cohen-Macaualy of degree $2\alpha-1$.
    \begin{itemize}
    \item[(i)] $I_S=(Q, F_1, F_2)$, where $F_1$ and $F_2$ are polynomials of degree $\alpha$.
    \item[(ii)] $\reg(S)=\alpha$.
   \end{itemize}
 \end{itemize}
 \end{remk}

From the above Remark ~\ref{CM surface}, we can compute $g(S \cap H)$ and $\rho_a(S)$
in terms of the degree of defining equations of $S$ by finding the Hilbert polynomial
of $S$ in two ways. Therefore, we have the following Theorem.

\begin{Thm}{\label{cor:01}}
Let $S\subset \p^4$ be a smooth irreducible surface of degree $d$ and arithmetic
genus $\rho_{a}(S)$, which is contained in a quadric hypersurface.
\begin{itemize}
\item[(a)] Suppose $S$ is of degree $2\alpha, \alpha\geq 3$. Then,
$$M(I_S) = \frac{1}{2}(\alpha^4-4\alpha^3+5\alpha^2-2\alpha+4).$$
\item[(b)] Suppose $S$ is of degree $2\alpha-1, \alpha\geq 4$. Then
$$M(I_S) = \frac{1}{2}(\alpha^4-6\alpha^3+13\alpha^2-12\alpha+8).$$
\end{itemize}
\end{Thm}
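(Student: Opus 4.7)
The plan is to convert the formula from Proposition~\ref{mainthm2} into a closed expression in $\alpha$ by computing the remaining geometric invariants from the algebraic description of $S$ given in Remark~\ref{CM surface}. First note that both hypotheses force $d\ge 6$: in (a) we have $d=2\alpha\ge 6$, and in (b) we have $d=2\alpha-1\ge 7$. Hence Proposition~\ref{mainthm2} together with Remark~\ref{genusdouble} gives
$$M(I_S)=\binom{\binom{d-1}{2}-g(S\cap H)-1}{2}-\binom{d-1}{3}+\binom{d-1}{2}-g(S\cap H)+\rho_a(S)+1,$$
so the whole task reduces to expressing $g(S\cap H)$ and $\rho_a(S)$ as polynomials in $\alpha$ in each of the two cases and simplifying.

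For case (a), $I_S=(Q,F)$ is a complete intersection of type $(2,\alpha)$, and the Koszul complex yields the minimal free resolution
$$0\longrightarrow R(-2-\alpha)\longrightarrow R(-2)\oplus R(-\alpha)\longrightarrow R\longrightarrow R/I_S\longrightarrow 0.$$
From this I would read off the Hilbert polynomial as an alternating sum of binomials in $z$, confirm $d=2\alpha$, and compute $\rho_a(S)=P_S(0)-1$ together with $g(S\cap H)=1-P_{S\cap H}(0)$ via $P_{S\cap H}(z)=P_S(z)-P_S(z-1)$ (equivalently, use the analogous Koszul complex for the $(2,\alpha)$ complete intersection curve in $\p^3$). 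Substituting the resulting closed-form expressions into the master formula and collecting terms should yield $\tfrac{1}{2}(\alpha^4-4\alpha^3+5\alpha^2-2\alpha+4)$.

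For case (b), $S$ is arithmetically Cohen--Macaulay of codimension two with $I_S=(Q,F_1,F_2)$, and the Hilbert--Burch theorem produces a minimal free resolution of the form
$$0\longrightarrow R(-\alpha-1)^{2}\longrightarrow R(-2)\oplus R(-\alpha)^{2}\longrightarrow R\longrightarrow R/I_S\longrightarrow 0.$$
The shape is forced: writing $Q$ as a $2\times 2$ minor of the syzygy matrix gives that the two syzygy twists $e_1,e_2$ satisfy $e_1+e_2=2\alpha+2$, and minimality forces every entry of the syzygy matrix to have positive degree, which pins $e_1=e_2=\alpha+1$ and matches $\reg(S)=\alpha$ from Remark~\ref{CM surface}(b). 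With this resolution in hand, the same Hilbert-polynomial extraction as in (a) produces $d=2\alpha-1$ together with explicit expressions for $\rho_a(S)$ and $g(S\cap H)$ in $\alpha$; substitution into the master formula should give $\tfrac{1}{2}(\alpha^4-6\alpha^3+13\alpha^2-12\alpha+8)$.

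The substantive point, and the only place where something non-mechanical happens, is case (b): one must verify that the Hilbert--Burch resolution really has the stated symmetric shape, so that the subsequent Hilbert-polynomial computation is unambiguous. Once this is settled, everything else reduces to elementary polynomial arithmetic, with the regularities $\alpha+1$ and $\alpha$ recorded in Remark~\ref{CM surface} serving as convenient consistency checks along the way.
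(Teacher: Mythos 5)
Your proposal is correct and follows essentially the same route as the paper: reduce to the formula of Proposition~\ref{mainthm2} (valid here since $d\ge 6$), in the form recorded in Remark~\ref{genusdouble}, and then extract $g(S\cap H)$ and $\rho_a(S)$ from the Koszul resolution in case (a) and the Hilbert--Burch resolution in case (b). Your added justification that the Hilbert--Burch syzygy twists are forced to be $R(-\alpha-1)^{2}$ is a detail the paper leaves implicit, but it does not alter the argument.
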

\begin{proof} For a proof of (a), by Koszul complex we have the minimal free resolution
of the defining ideal $I_S$ as follows:
$$
0 \longrightarrow R(-\alpha-2) \longrightarrow R(-2)\oplus
R(-\alpha) \longrightarrow I_S\longrightarrow 0,
$$
Hence the Hilbert function of $R/I_S$ is given by
\begin{align*}
H(R/I_S,m)=&\alpha m^2 + (- \alpha^2 +3 \alpha )m +
\frac{1}{6}\alpha(2\alpha^2-9\alpha+13)\\[1ex]
=& \frac{2\alpha}{2}m^2+\left(\alpha+1-g(S\cap H)\right )m
+\rho_a(S)+1.
\end{align*}
Hence $g(S \cap H) = (\alpha-1)^{2}$ and
$\rho_a(S)=\frac{1}{6}\alpha(2\alpha^2-9\alpha+13)-1.$\\
If $Y_1(S)$ is the double curve of $S$ then
$$\deg Y_1(S)=\binom{2\alpha-1}{2}-g(S \cap H)=\alpha(\alpha-1).$$
By Remark~\ref{genusdouble},
$$g(Y_1(S))=\binom{2\alpha-1}{3}-\binom{2\alpha-1}{2}+g(S\cap H)-\rho_a(S)+1.$$

Thus we conclude that
\begin{equation*}
\begin{split}
M(I_S) = & 2+\binom{\alpha(\alpha-1)-1}{2}-g(Y_1(S))\\[1ex]
           = & \binom{\alpha(\alpha-1)-1}{2}-\binom{2\alpha-1}{3}
+\binom{2\alpha-1}{2}-(\alpha-1)^2 +\rho_a (S) + 1\\[1ex]
           =&\frac{1}{2}(\alpha^4-4\alpha^3+5\alpha^2-2\alpha+4).
\end{split}
\end{equation*}
For a proof of (b), let $S$ be a smooth surface of degree $2\alpha-1$ lying on a quadric
hypersurface in $\p^4$.
Note that $S$ is arithmetically Cohen-Macaulay of codimension 2. By the
Hilbert-Burch Theorem \cite{E} we have the minimal free resolution
of the defining ideal $I_S$ as follows:
\begin{equation*}
0\longrightarrow R(-\alpha-1)^2 \stackrel{ \left(\begin{array}{lll}
L_1 & L_2\\
L_3 & L_4\\
F_5 & F_6\\
\end{array}
\right ) }{\longrightarrow} R(-2)\oplus R(-\alpha)^2 \longrightarrow
I_S\longrightarrow0,
\end{equation*}
where $L_1, L_2, L_3, L_4$ are linear forms and $F_5, F_6$ are forms
of degree $\alpha-1$. Hence the Hilbert function of $R/I_S$ is given
by

\begin{align*}
H(R/I_S,m)=&\frac{1}{2}(2\alpha-1)m^2 + \left(4\alpha-\alpha^2-\frac{3}{2}\right )m+\frac{1}{3}\alpha^3-2\alpha+\frac{11}{3}\alpha-1\\[1ex]
=& \frac{(2\alpha-1)}{2}m^2+\left(\frac{2\alpha-1}{2}+1 -g(S\cap H)\right )m +\rho_a(S)+1.
\end{align*}

Hence we have that $g(S \cap H) =
2\displaystyle\binom{\alpha-1}{2}$ and $\rho_a(S)=2\displaystyle
\binom{\alpha-1}{3}$.

If $Y_1(S)$ be the double curve of $S$ then
$$\deg Y_1(S)=\binom{2\alpha-2}{2}-g(S \cap H)=\binom{2\alpha-2}{2}-2\binom{\alpha-1}{2}.$$

On the other hand, we have
\begin{align*}
g(Y_1(S))=&\,\binom{2\alpha-2}{3}-\binom{2\alpha-2}{2}+g(S\cap H)-\rho_a(S)+1\\[1ex]
                    =&\,(\alpha-2)(\alpha^2-3\alpha+1)
\end{align*}
 and thus we conclude that
\begin{equation*}
\begin{split}
M(I_S) = &\, 2+\binom{\deg Y_1(S)-1}{2}-g(Y_1(S))\\[1ex]
           =&\,\frac{1}{2}(\alpha^4-6\alpha^3+13\alpha^2-12\alpha+8).
\end{split}
\end{equation*}
\end{proof}

\begin{Ex}[Macaulay 2]~\label{Exam:01}
We give some examples of $\Gin(I_S)$ and $M(I_S)$ computed by using {\it Macaulay 2}.
\begin{itemize}
 \item[(a)] Let $S$ be a rational normal scroll in $\p^4$ whose defining ideal is
$$I_S= ( x_{0} x_{3} - x_{1} x_{2} , x_{0} x_{1} - x_{3} x_{4} , x_{0}^{2} - x_{2} x_{4}).$$
Using Macaulay 2, we can compute the generic initial ideal of $I_S$
with respect to GLex:
$$\Gin(I_S)=(x_0^2, x_0x_1, x_0x_2, {\bf x_1^3}).$$
Thus $\reg(\Gin_{\lex}(K_{0}))=3$ and $\reg(\Gin_{\lex}(K_{1}))=1$.
Therefore,
$$
M(I_S) = \deg S = 3.
$$
 \item[(b)] Let $S$ be a complete intersection of $(2,2)$-type in $\p^4$. Then,
 $$\Gin(I_S)=(x_0^2, x_0x_1, {\bf x_1^4}, x_0x_2^2).$$
 Hence, we see $M(I_S)=\deg S=4$.
 \item[(c)] Let $S$ be a Castelnuovo surface of degree 5 in $\p^4$. Then, we can compute
 $$\Gin(I_S)=(x_0^2, x_0x_1^2, {\bf x_1^5}, x_0x_1x_2, x_0x_2^4, x_0x_1x_3^2).$$
 Hence, we see $M(I_S)=\deg S=5$.
 \item[(d)] Let $S$ be a complete intersection of $(2,3)$-type in $\p^4$. Then, we see that $M(I_S)=8$ from Theorem~\ref{cor:01}. On the other hand, we can compute the generic initial ideal:
 $$
 \Gin(I_S)= (x_0^2, x_0x_1^2, x_1^6, x_0x_1x_2^2, x_0x_2^6, x_0x_1x_2x_3^2, {\bf x_0x_1x_3^6}, x_0x_1x_2x_3x_4^2,  x_0x_1x_2x_4^4).
 $$
 This also shows $M(I_S)=8$.
 \item[(e)] Let $S$ be a smooth surface of degree 7 lying on a quadric which is not a complete intersection in $\p^4$. Then, the minimal resolution of $I_S$ is given by Hilbert-Burch Theorem and thus we have
 $$I_S=(L_1L_4-L_2L_3, L_1F_5-L_2F_6, L_3F_5-L_4F_6),$$
 where $L_i$ is a linear form and $F_5, F_6$ are forms of degree 3. This is the case of $\alpha=4$ in Theorem~\ref{cor:01} and we see $M(I_S)=20$. This can also be obtained by the computation of generic initial ideal of $I_S$ using {\it Macaulay 2}:
\begin{align*}
\Gin(I_S)=&(x_0^2,\, x_0x_1^3,\, x_1^7,\, x_0x_1^2x_2,\, x_0x_1x_2^4,\, x_0x_2^9,\, x_0x_1^2x_3^2,\,
            x_0x_1x_2^3x_3^2,\, x_0x_1x_2^2x_3^5,\,\\
            & x_0x_1x_2x_3^8,\, {\bf x_0x_1x_3^{18}},\, x_0x_1x_2^2x_3^4x_4,\, x_0x_1^2x_3x_4^2,\, x_0x_1x_2^3x_3x_4^2,\,
            x_0x_1x_2^2x_3^3x_4^2,\, \\
            & x_0x_1x_2x_3^7x_4^2,\, x_0x_1x_2^3x_4^3,\, x_0x_1^2x_4^4,\,
             x_0x_1x_2^2x_3^2x_4^4,\, x_0x_1x_2x_3^6x_4^4,\, x_0x_1x_2^2x_3x_4^5,\, \\
            & x_0x_1x_2x_3^5x_4^6,\, x_0x_1x_2^2x_4^7,\, x_0x_1x_2x_3^4x_4^8,\, x_0x_1x_2x_3^3x_4^{10},\, x_0x_1x_2x_3^2x_4^{12},\, \\
            &x_0x_1x_2x_3x_4^{14},\, x_0x_1x_2x_4^{16})
\end{align*}

  \item[(f)] Let $S$ be a complete intersection of $(2,4)$-type in $\p^4$. Then, we see that $M(I_S)=38$ from Theorem~\ref{cor:01}. This can be given by the computation of generic initial ideal of $I_S$:
  \begin{align*}
\Gin(I_S)&= (x_0^2,\, x_0x_1^3,\, x_1^8,\, x_0x_1^2x_2^2,\, x_0x_1x_2^6,\, x_0x_2^{12},\, x_0x_1^2x_2x_3^2,\, x_0x_1x_2^5x_3^2,\,  \\
            & x_0x_1^2x_3^5,\, x_0x_1x_2^4x_3^5,\, x_0x_1x_2^3x_3^7,\, x_0x_1x_2^2x_3^{11} ,\, x_0x_1x_2x_3^{17},\, {\bf x_0x_1x_3^{36}},\, \\
            & x_0x_1^2x_3^4x_4,\, x_0x_1x_2^4x_3^4x_4,\, x_0x_1x_2^3x_3^6x_4,\, x_0x_1x_2^2x_3^{10}x_4,\, x_0x_1^2x_2x_3x_4^2,\,\\
            & x_0x_1x_2^5x_3x_4^2,\, x_0x_1^2x_3^3x_4^2,\, x_0x_1x_2^4x_3^3x_4^2,\, x_0x_1x_2^2x_3^9x_4^2 ,\,x_0x_1x_2x_3^{16}x_4^2,\,\\
           & x_0x_1^2x_2x_4^3 ,\,x_0x_1x_2^5x_4^3,\,x_0x_1x_2^4x_3^2x_4^3 ,\, x_0x_1x_2^3x_3^5x_4^3 ,\, x_0x_1^2x_3^2x_4^4,\,\\
           & x_0x_1x_2^3x_3^4x_4^4 ,\, x_0x_1x_2^2x_3^8x_4^4 ,\, x_0x_1x_2x_3^{15}x_4^4,\, x_0x_1^2x_3x_4^5,\, x_0x_1x_2^4x_3x_4^5,\, \\
           &x_0x_1x_2^3x_3^3x_4^5,\,  x_0x_1x_2^2x_3^7x_4^5,\, x_0x_1x_2^4x_4^6,\, x_0x_1x_2x_3^{14}x_4^6,\, x_0x_1^2x_4^7,\,\\
           & x_0x_1x_2^3x_3^2x_4^7,\, x_0x_1x_2^2x_3^6x_4^7,\, x_0x_1x_2^3x_3x_4^8,\, x_0x_1x_2^2x_3^5x_4^8,\, x_0x_1x_2x_3^{13}x_4^8,\,\\
           &  x_0x_1x_2^3x_4^9,\,x_0x_1x_2^2x_3^4x_4^{10},\, x_0x_1x_2x_3^{12}x_4^{10},\, x_0x_1x_2^2x_3^3x_4^{11},\,x_0x_1x_2x_3^{11}x_4^{12},\, \\
           & x_0x_1x_2^2x_3^2x_4^{13},\, x_0x_1x_2^2x_3x_4^{14},\,x_0x_1x_2x_3^{10}x_4^{14},\, x_0x_1x_2^2x_4^{16},\, x_0x_1x_2x_3^9x_4^{16},\,\\
           & x_0x_1x_2x_3^8x_4^{18},\, x_0x_1x_2x_3^7x_4^{20},\, x_0x_1x_2x_3^6x_4^{22},\,x_0x_1x_2x_3^5x_4^{24},\, x_0x_1x_2x_3^4x_4^{26},\,\\
           &  x_0x_1x_2x_3^3x_4^{28},\,x_0x_1x_2x_3^2x_4^{30},\, x_0x_1x_2x_3x_4^{32},\, x_0x_1x_2x_4^{34}).
\end{align*}

\end{itemize}
\end{Ex}

Even though we cannot compute the generic initial ideals for the cases $\alpha\geq 5$ by using computer algebra systems, we know the degree-complexity of smooth surfaces lying on a quadric by theoretical computations.  We give the following tables:\\
\vskip.5pc

{\hrule\hrule\vskip .5pc
\begin{flushleft}
{\bf Table 1}  The complete intersection $S$ of $(2,\alpha)$-type in $\p^4$
\end{flushleft}
\vskip .2pc
\hrule\hrule
{
\begin{flushleft}
 \begin{tabular}{|c|c|c|c|c|c|c|c|c|c|c|c|c|c|c|c|c|c|c|c|c|c|c|c|c|c}
$\alpha$  & 5 & 6   & 7     & 8     & 9     & 10     & 20    & 50   & 100   \\[1ex]
\hline
$M(I_S)$  & 122 & 302 & 632 & 1178 & 2018 & 3242 & 64982  & 2881202 & 48024902  \\[1ex]
\hline
    $m(I_S)$ & 6 & 7   & 8     & 9     & 10      & 11      &  21     & 51      &  101   \\[1ex]
 \end{tabular}

\end{flushleft}
}
\hrule\hrule\vskip .5pc
}

{\hrule\hrule\vskip .5pc
\begin{center}
{\bf Table 2}  The smooth surface $S\subset \p^4$ of degree $(2\alpha-1)$ lying on a quadric.
\end{center}
\vskip .2pc
\hrule\hrule
{
\begin{center}
 \begin{tabular}{|c|c|c|c|c|c|c|c|c|c|c|c|c|c|c|c|c|c|c|c|c|c|c|c|c|c}
$\alpha$  & 5 & 6   & 7     & 8     & 9     & 10     & 20    & 50   & 100   \\[1ex]
\hline
$M(I_S)$  & 74 & 202 & 452 & 884 & 1570 & 2594 & 58484  & 2765954 & 47064404  \\[1ex]
\hline
    $m(I_S)$ & 5 & 6 & 7   & 8     & 9     & 10      & 20      &  50     & 100 \\[1ex]
 \end{tabular}

\end{center}
}
\hrule\hrule\vskip .5pc
}

\bigskip
\begin{RQ}\label{mainthm3}
Let $S$ be a non-degenerate smooth surface of degree $d$ and
arithmetic genus $\rho_{a}(S)$, not necessarily contained in a quadric hypersurface in $\p^{4}$. 
Our question is: What can be the degree complexity $M(I_S)$ of $S$? 
It is expected that $K_1(I_S)$ and $K_2(I_S)$ are reduced ideals and the degree-complexity $M(I_S)$ is given by
\[
\begin{array}{lll}
M(I_{S})&=& \max
\begin{cases}
\deg(S)  &\text{ }\\
\reg(\Gin_{\lex}(K_1(I_{S})))+1 &\\
\reg(\Gin_{\lex}(K_2(I_{S})))+2
\end{cases}\\
&&\\
&=& \max
\begin{cases}
d  &\text{ }\\
M(I_{Y_1(S)})+1 &\text{ }\\
t+2.
\end{cases}
\end{array}
\]
Note that $t$ is the number of apparent triple points of $S\subset \p^4$
and $Y_1(S)$ is the double curve (possibly singular with ordinary double points)
under a generic projection.
\end{RQ}
\qed

\bibliographystyle{amsalpha}

\end{document}